\numberwithin{equation}{section}
\newtheorem{Theorem}{Theorem}[section]
\newtheorem{Corollary}[Theorem]{Corollary}
\newtheorem{Lemma}[Theorem]{Lemma}
\newtheorem{Proposition}[Theorem]{Proposition}
 { \theoremstyle{definition}
\newtheorem{Definition}[Theorem]{Definition}
\newtheorem{Example}[Theorem]{Example}
 }
\begin{document}

\allowdisplaybreaks

\newcommand{\arXivNumber}{1709.08394}

\renewcommand{\PaperNumber}{026}

\FirstPageHeading

\ShortArticleName{Contravariant Form on Tensor Product of Highest Weight Modules}

\ArticleName{Contravariant Form on Tensor Product\\ of Highest Weight Modules}

\Author{Andrey I.~MUDROV}

\AuthorNameForHeading{A.I.~Mudrov}

\Address{Department of Mathematics, University of Leicester, University Road, LE1 7RH Leicester, UK}
\Email{\href{mailto:am405@le.ac.uk}{am405@le.ac.uk}}

\ArticleDates{Received August 23, 2018, in final form March 25, 2019; Published online April 07, 2019}

\Abstract{We give a criterion for complete reducibility of tensor product $V\otimes Z$ of two irreducible highest weight modules $V$ and $Z$ over a classical or quantum semi-simple group in terms of a contravariant symmetric bilinear form on $V\otimes Z$. This form is the product of the canonical contravariant forms on $V$ and $Z$. Then $V\otimes Z$ is completely reducible if and only if the form is non-degenerate when restricted to the sum of all highest weight submodules in $V\otimes Z$ or equivalently to the span of singular vectors.}

\Keywords{highest weight modules; contravariant form; tensor product; complete reducibi\-lity}

\Classification{17B10; 17B37}

\rightline{\it In memoriam Ludwig Faddeev}

\section{Introduction}

Consider a semi-simple Lie algebra over the complex field and its classical or quantum universal enveloping algebra (assuming $q$ not a root of unity in the latter case). Fix a pair of irreducible highest weight modules. The question when their tensor product is completely reducible is a~fundamental problem of representation theory. That is always the case when the modules are finite-dimensional, due to the Weyl theorem, cf.~\cite{Erd} for classical and \cite{Jan} for quantum groups. However the problem is a challenge if one of them is infinite-dimensional. We give a criterion of complete reducibility in terms of a canonical contravariant form that is the product of the Shapovalov forms on the factors~\cite{Sh}. Quite amazingly, this result appears to be new.

Tensor products of representations of simple Lie algebras have been studied in detail in~\cite{BG, K}, where the main tool was the action of the center of their universal enveloping algebras. Our approach based on contravariant forms displays a relation with extremal projectors~\cite{AST,KT} and dynamical Weyl group~\cite{EV}. Unlike~\cite{BG}, we allow for arbitrary dimensions of the tensor factors. We simultaneously address the classical and quantum cases, although the exposition is provided for quantum groups. The classical case is a simplification that can be obtained by setting the deformation parameter $q\to 1$.

We prove that the tensor product of two irreducible modules $V\otimes Z$ of highest weight is completely reducible if and only if the canonical contravariant form is non-degenerate when restricted to the sum of all highest weight submodules in $V\otimes Z$. It is easy to see that the necessary condition here comes for free. Conversely, if the form is non-degenerate on the sum of highest weight submodules, they are all irreducible. It is then sufficient to prove that the sum exhausts all of $V\otimes Z$, which takes a certain effort.

Equivalently, $V\otimes Z$ is completely reducible if and only if the form is non-degenerate on the subspace of singular vectors in $V\otimes Z$, i.e., annihilated by positive Chevalley generators. Having parameterized them with elements from either of reciprocal ``extremal subspaces" $V^+_Z\subset V$ and $Z^+_V\subset Z$, we construct operators $\theta_{V,Z}$, $\theta_{Z,V}$ sending $V^+_Z$ and $Z^+_V$ to their duals under the Shapovalov forms on $V$ and $Z$, respectively. These parametrisations link the canonical form with Zhelobenko cocycles~\cite{Zh} and lay a ground for developing computational methods.

One application of the presented result is quantization of equivariant vector bundles over semi-simple conjugacy classes and coadjoint orbits regarded as projective modules over quantized function algebras.
The infinite-dimensional modules of interest are parabolic or generalized parabolic modules generating module categories over the category of finite-dimensional representations. A quantim conjugacy class or a coadjoint orbit is realized by linear endomorphisms on such a module whose tensor product with finite-dimensional modules supports ``representations'' of a trivial bundle. Its non-trivial sub-bundles correspond to invariant idempotents that project the tensor product onto submodules~\cite{M1,M2}.

\section{Preliminaries}\label{SecPrelim}
Suppose that $\mathfrak{g}$ is a complex semi-simple Lie algebra and $\mathfrak{h}\subset \mathfrak{g}$ its Cartan subalgebra. Fix a~triangular decomposition $\mathfrak{g}=\mathfrak{g}_-\oplus \mathfrak{h}\oplus \mathfrak{g}_+$ with nilpotent Lie subalgebras $\mathfrak{g}_\pm$. Denote by $\mathrm{R}$ the root system of $\mathfrak{g}$, and by $\mathrm{R}^+$ the subset of positive roots with basis $\Pi^+$ of simple roots. Choose an inner product $(\cdot,\cdot)$ on $\mathfrak{h}$ as a multiple of the restricted Killing form and transfer it to the dual space $\mathfrak{h}^*$. For all $\lambda\in \mathfrak{h}^*$ denote by $h_\lambda$ the element of $\mathfrak{h}$ such that $\mu(h_\lambda)=(\mu,\lambda)$ for all $\mu\in \mathfrak{h}^*$.

By $U_q(\mathfrak{g})$ we understand the standard quantum group \cite{ChP, D3}, that is a complex unital algebra with the set of generators $e_\alpha$, $f_\alpha$, and $q^{\pm h_\alpha}$ satisfying
\begin{gather*}
q^{\pm h_\alpha}e_\beta=q^{\pm (\alpha,\beta)}e_\beta q^{\pm h_\alpha}, \qquad [e_\alpha,f_\beta]=\delta_{\alpha,\beta}[h_\alpha]_q,\\
q^{\pm h_\alpha}f_\beta=q^{\mp (\alpha,\beta)}f_\beta q^{\pm h_\alpha},\qquad \alpha \in \Pi^+,
\end{gather*}
plus the Serre relations among $\{e_\alpha\}_{\alpha\in \Pi^+}$ and $\{f_\alpha\}_{\alpha\in \Pi^+}$, see \cite{ChP} for details. Here $[h_\alpha]_q=\frac{q^{h_\alpha}-q^{-h_\alpha}}{q-q^{-1}}$, and $q^{h_\alpha}q^{-h_\alpha}=1=q^{-h_\alpha}q^{h_\alpha}$. The complex parameter $q\not =0$ is assumed not a~root of unity.

Fix the comultiplication in $U_q(\mathfrak{g})$ as
\begin{gather*}
\Delta(f_\alpha)= f_\alpha\otimes 1+q^{-h_\alpha}\otimes f_\alpha,\qquad\!\! \Delta\big(q^{\pm h_\alpha}\big)=q^{\pm h_\alpha}\otimes q^{\pm h_\alpha},\qquad\!\!\Delta(e_\alpha)= e_\alpha\otimes q^{h_\alpha}+1\otimes e_\alpha.
\end{gather*}
Then the antipode acts on the generators by the assignment $\gamma( f_\alpha)=- q^{h_\alpha}f_\alpha$, $\gamma\big( q^{\pm h_\alpha}\big)=q^{\mp h_\alpha}$, $\gamma( e_\alpha)=- e_\alpha q^{-h_\alpha}$. The counit homomorphism returns $\epsilon(e_\alpha)=\epsilon(f_\alpha)=0$, and $\epsilon\big(q^{\pm h_\alpha}\big)=1$.

Denote by $U_q(\mathfrak{h})$, $U_q(\mathfrak{g}_+)$, $U_q(\mathfrak{g}_-)$ the subalgebras in $U_q(\mathfrak{g})$ generated by $\big\{q^{\pm h_\alpha}\big\}_{\alpha\in \Pi^+}$,\linebreak $\{e_\alpha\}_{\alpha\in \Pi^+}$, and $\{f_\alpha\}_{\alpha\in \Pi^+}$, respectively. The quantum Borel subgroups $U_q(\mathfrak{b}_\pm)=U_q(\mathfrak{g}_\pm)U_q(\mathfrak{h})$ are Hopf subalgebras in $U_q\!(\mathfrak{g})$. We introduce a grading on $U_q\!(\mathfrak{g}_\pm\!)$ by setting \smash{$\deg(e_\alpha)\!=\!1\!=\!\deg(f_\alpha)$}.

We will need the following involutive maps on $U_q(\mathfrak{g})$.
The assignment
\begin{gather*}
\sigma\colon \ e_\alpha\mapsto f_\alpha, \qquad\sigma\colon \ f_\alpha\mapsto e_\alpha, \qquad \sigma\colon \ q^{h_\alpha}\mapsto q^{-h_\alpha}
\end{gather*}
gives rise to an algebra automorphism of $U_q(\mathfrak{g})$. It is also a coalgebra anti-automorphism. The assignment
\begin{gather*}
\tilde \omega\colon \ e_\alpha\mapsto f_\alpha, \qquad\tilde \omega\colon \ f_\alpha\mapsto e_\alpha, \qquad \tilde \omega\colon \ q^{h_\alpha}\mapsto q^{h_\alpha}
\end{gather*}
for all $\alpha \in \Pi^+$ extends to an algebra anti-automorphism of $U_q(\mathfrak{g})$. This map flips the subalgebras~$U_q(\mathfrak{g}_+)$ and $U_q(\mathfrak{g}_-)$ but it is not compatible with the comultiplication.

We will also work with the involutive map $\omega =\gamma^{-1}\circ \sigma$, which is an algebra anti-automorphism and preserves comultiplication. However, it does not exchange $U_q(\mathfrak{g}_+)$ and $U_q(\mathfrak{g}_-)$ as $\tilde \omega$ does. To fix this disadvantage, we have to pass to the Borel subalgebras. Let us show that replacement of~$\tilde \omega$ with $\omega$ does not affects certain left/right ideals in $U_q(\mathfrak{b}_\pm)$ which are of interest for the present exposition.

An element $\psi\in U_q(\mathfrak{g})$ is said to be of weight $\lambda\in \mathfrak{h}^*$ if $q^{h_\alpha}\psi q^{-h_\alpha}=q^{(\lambda,\alpha)}\psi$. Such weights are linear combinations of simple roots with integer coefficients.

\begin{Lemma}\label{eq-ideals} The map $f_{\alpha}\mapsto c f_{\alpha}q^{\pm h_\alpha}$, with non-zero $c\in \mathbb{C}$, extends to an automorphism of the Borel subalgebra $U_q(\mathfrak{b}_-)$. Under this map, any element $\psi\in U_q(\mathfrak{b}_-)$ of a given weight is sent to $\psi \phi$ for some $\phi\in U_q(\mathfrak{h})$.
\end{Lemma}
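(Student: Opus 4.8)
The plan is to verify that the assignment respects the defining relations of $U_q(\mathfrak{b}_-)$, and then to read the stated normal form on weight elements off the same bookkeeping. Fix a sign $\varepsilon\in\{+1,-1\}$ and put $F_\alpha:=c\,f_\alpha q^{\varepsilon h_\alpha}$, leaving the generators $q^{\pm h_\alpha}$ unchanged. The relation $q^{h_\alpha}q^{-h_\alpha}=1$ is obviously preserved, and the cross relation holds since $q^{h_\alpha}$ commutes with $q^{\varepsilon h_\beta}$:
\[
q^{\pm h_\alpha}F_\beta=c\,q^{\pm h_\alpha}f_\beta q^{\varepsilon h_\beta}=q^{\mp(\alpha,\beta)}\,c\,f_\beta q^{\varepsilon h_\beta}q^{\pm h_\alpha}=q^{\mp(\alpha,\beta)}F_\beta q^{\pm h_\alpha}.
\]
So the whole matter reduces to the $q$-Serre relations.

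For these I would first record, by an immediate induction on $m$, the reordering identity
\[
F_{\gamma_1}\cdots F_{\gamma_m}=c^{m}\,q^{-\varepsilon\sum_{i<j}(\gamma_i,\gamma_j)}\,f_{\gamma_1}\cdots f_{\gamma_m}\,q^{\varepsilon h_\mu},\qquad \mu:=\gamma_1+\cdots+\gamma_m,
\]
obtained by sweeping each $q^{\varepsilon h_{\gamma_i}}$ to the far right past the remaining $f$'s. The key point is that $\sum_{i<j}(\gamma_i,\gamma_j)=\frac{1}{2}\big[(\mu,\mu)-\sum_i(\gamma_i,\gamma_i)\big]$, and that $\sum_i(\gamma_i,\gamma_i)$, hence this whole exponent, depends only on $\mu$ — the multiset $\{\gamma_i\}$ being just the decomposition of $\mu$ into simple roots. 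Applying this to the list made of $n:=1-a_{\alpha\beta}$ copies of $\alpha$ and one copy of $\beta$, for each $k=0,\dots,n$ the term $F_\alpha^{n-k}F_\beta F_\alpha^{k}$ equals one and the same scalar times $f_\alpha^{n-k}f_\beta f_\alpha^{k}$ times the fixed Cartan tail $q^{\varepsilon h_{n\alpha+\beta}}$. Hence, writing the $q$-Serre relation in $U_q(\mathfrak{b}_-)$ as $\sum_{k=0}^{n}c_k\,f_\alpha^{n-k}f_\beta f_\alpha^{k}=0$ for suitable scalars $c_k$, the corresponding combination of the $F$'s equals that common scalar times $\big(\sum_{k}c_k\,f_\alpha^{n-k}f_\beta f_\alpha^{k}\big)q^{\varepsilon h_{n\alpha+\beta}}$, which vanishes. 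Thus the assignment extends to an algebra endomorphism $\tau$ of $U_q(\mathfrak{b}_-)$; running the identical argument for $f_\alpha\mapsto c^{-1}f_\alpha q^{-\varepsilon h_\alpha}$ gives an endomorphism that inverts $\tau$ on generators, so $\tau$ is an automorphism.

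For the second assertion, the reordering identity together with linearity shows that $\tau$ acts on the weight space $U_q(\mathfrak{g}_-)_{-\mu}$ as $x\mapsto\lambda_\mu\,x\,q^{\varepsilon h_\mu}$ with a scalar $\lambda_\mu$ depending only on $\mu$ (and $q^{\varepsilon h_\mu}\in U_q(\mathfrak{h})$ because $\mu$ is a sum of simple roots). Any element $\psi\in U_q(\mathfrak{b}_-)$ of weight $-\mu$ can be written, via the triangular decomposition $U_q(\mathfrak{b}_-)=U_q(\mathfrak{g}_-)U_q(\mathfrak{h})$, as a finite sum $\psi=\sum_j x_j\eta_j$ with $x_j\in U_q(\mathfrak{g}_-)_{-\mu}$ and $\eta_j\in U_q(\mathfrak{h})$. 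Since $\tau$ fixes $U_q(\mathfrak{h})$ pointwise and $q^{\varepsilon h_\mu}$ commutes with each $\eta_j$, one gets $\tau(\psi)=\sum_j\lambda_\mu x_j q^{\varepsilon h_\mu}\eta_j=\big(\sum_j x_j\eta_j\big)\lambda_\mu q^{\varepsilon h_\mu}=\psi\,\phi$ with $\phi:=\lambda_\mu q^{\varepsilon h_\mu}\in U_q(\mathfrak{h})$, as claimed.

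I expect the only real obstacle to be the Serre relation — precisely the claim that reordering $F_\alpha^{n-k}F_\beta F_\alpha^{k}$ yields a scalar and a Cartan tail independent of $k$, which is exactly what lets the Serre coefficients $c_k$ pass through untouched, and also the reason a uniform choice of sign $\varepsilon$ (rather than $\alpha$-dependent signs) is required. Everything else is routine manipulation with the commutation relations, and the classical statement follows by the same computation in the limit $q\to1$.
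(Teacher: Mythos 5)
Your proof is correct and follows essentially the same route as the paper: the paper likewise reduces everything to the homogeneity of the Serre relations plus the observation that the reordering factor $\phi=c^m q^{\mp\sum_{i<j}(\beta_i,\beta_j)}q^{\sum_i h_{\beta_i}}$ depends only on the weight of the monomial. You simply make explicit (via the polarization identity and the uniqueness of the decomposition of $\mu$ into simple roots) the step the paper leaves implicit.
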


\begin{proof} Since the Serre relations are homogeneous, cf.~\cite{ChP}, it is sufficient to assume that $\psi$ is a~monomial in the Chevalley generators and check that $\phi$ depends only on the weight of~$\psi$. Let $\beta_1,\ldots, \beta_m$ be a~sequence of simple positive roots and take $\psi=f_{\beta_1}\cdots f_{\beta_{m}}$. Then $\phi=c^m q^{\mp \sum_{i<j}(\beta_i,\beta_j)}q^{\sum_{i=1}^{m}h_{\beta_i}}$.
\end{proof}
\begin{Corollary}\label{inv-ideals}Let $I\in U_q(\mathfrak{g}_+)$ be a $U_q(\mathfrak{h})$-graded left ideal. Then the right ideals in $U_q(\mathfrak{b}_-)$ generated by $\tilde \omega(I)$ and by $\omega(I)$ coincide.
\end{Corollary}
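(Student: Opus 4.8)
\emph{Proof proposal.} The plan is to show that on the subalgebra $U_q(\mathfrak{g}_+)$ the anti-automorphisms $\omega$ and $\tilde\omega$ differ only by an automorphism of $U_q(\mathfrak{b}_-)$ of the type provided by Lemma~\ref{eq-ideals}, and then to exploit the weight grading of $I$ to see that passing from $\tilde\omega(I)$ to $\omega(I)$ does not change the generated right ideal.

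First I would compute $\omega$ on the generators of $U_q(\mathfrak{g}_+)$. From $\gamma\big(q^{h_\alpha}\big)=q^{-h_\alpha}$ and $\gamma(f_\alpha)=-q^{h_\alpha}f_\alpha$ one checks that $\gamma\big({-}f_\alpha q^{h_\alpha}\big)=-\gamma\big(q^{h_\alpha}\big)\gamma(f_\alpha)=f_\alpha$, hence $\gamma^{-1}(f_\alpha)=-f_\alpha q^{h_\alpha}$ and therefore $\omega(e_\alpha)=\gamma^{-1}(\sigma(e_\alpha))=\gamma^{-1}(f_\alpha)=-f_\alpha q^{h_\alpha}$, while $\tilde\omega(e_\alpha)=f_\alpha$. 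Let $\tau$ be the automorphism of $U_q(\mathfrak{b}_-)$ supplied by Lemma~\ref{eq-ideals} with $c=-1$ and the upper sign, i.e., $\tau\colon f_\alpha\mapsto -f_\alpha q^{h_\alpha}$, $\tau\colon q^{\pm h_\alpha}\mapsto q^{\pm h_\alpha}$. Since $\omega(e_\alpha)\in U_q(\mathfrak{b}_-)$ and $\tilde\omega\big(U_q(\mathfrak{g}_+)\big)=U_q(\mathfrak{g}_-)\subset U_q(\mathfrak{b}_-)$, both $\omega$ and $\tau\circ\tilde\omega$ restrict to algebra anti-homomorphisms $U_q(\mathfrak{g}_+)\to U_q(\mathfrak{b}_-)$, and they agree on the generators $e_\alpha$; hence they coincide on all of $U_q(\mathfrak{g}_+)$. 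In particular $\omega(I)=\tau(\tilde\omega(I))$.

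It remains to see that $\tilde\omega(I)$ and $\tau(\tilde\omega(I))$ generate the same right ideal in $U_q(\mathfrak{b}_-)$. Being $U_q(\mathfrak{h})$-graded, $I$ is spanned by elements of a given weight, and so is $\tilde\omega(I)\subset U_q(\mathfrak{g}_-)$, because $\tilde\omega$ carries the weight-$\nu$ subspace to the weight-$(-\nu)$ subspace. By the second assertion of Lemma~\ref{eq-ideals}, for each weight-homogeneous $\psi\in\tilde\omega(I)$ we have $\tau(\psi)=\psi\phi$ with $\phi\in U_q(\mathfrak{h})$ depending only on the weight of $\psi$; moreover $\phi$ is invertible in $U_q(\mathfrak{h})$, being a nonzero scalar times the group-like element $q^{\sum_i h_{\beta_i}}$. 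Consequently $\tau(\tilde\omega(I))\subseteq \tilde\omega(I)\,U_q(\mathfrak{h})$ and, using $\phi^{-1}$, $\tilde\omega(I)\subseteq \tau(\tilde\omega(I))\,U_q(\mathfrak{h})$. Multiplying these inclusions on the right by $U_q(\mathfrak{b}_-)$ and using $U_q(\mathfrak{h})\,U_q(\mathfrak{b}_-)=U_q(\mathfrak{b}_-)$ gives $\omega(I)\,U_q(\mathfrak{b}_-)=\tau(\tilde\omega(I))\,U_q(\mathfrak{b}_-)=\tilde\omega(I)\,U_q(\mathfrak{b}_-)$, which is exactly the claimed equality of right ideals.

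The only real subtlety is that $\tau$ does \emph{not} fix $\tilde\omega(I)$ elementwise, so the argument genuinely uses the $U_q(\mathfrak{h})$-grading of $I$: it is precisely this that forces the discrepancy between $\tau(\psi)$ and $\psi$ to be right multiplication by an invertible Cartan element, which is harmless once one passes to the generated right ideal. Everything else reduces to the verification on Chevalley generators carried out above.
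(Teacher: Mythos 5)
Your proposal is correct and is essentially the argument the paper intends: its own proof is a one-line reduction to Lemma~\ref{eq-ideals}, and you have simply supplied the details, namely that $\omega=\tau\circ\tilde\omega$ on $U_q(\mathfrak{g}_+)$ for the automorphism $\tau\colon f_\alpha\mapsto -f_\alpha q^{h_\alpha}$ of Lemma~\ref{eq-ideals}, and that on weight-homogeneous elements $\tau$ acts by right multiplication by an invertible element of $U_q(\mathfrak{h})$, which does not change the generated right ideal. No gaps.
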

\begin{proof} A $U_q(\mathfrak{h})$-graded left ideal is generated by elements of certain weights. Now the proof follows from Lemma~\ref{eq-ideals}.
\end{proof}

\section{Extremal vectors in tensor product}
In this preparatory section we fix some notation and describe two parametrizations of singular vectors in $V\otimes Z$ by hom-sets relative to the subalgebra $U_q(\mathfrak{g}_+)$. We will use those parameterizations for constructing pullbacks of a contravariant form on $V\otimes Z$.

For every $U_q(\mathfrak{h})$-module $V$ we denote by $\Lambda(V)$ the set of its weights, i.e., $\mu\in \Lambda(V)$ if there is a non-zero $v\in V$ such that $q^{h_{\alpha}}v=q^{(\alpha,\mu)}v$, $\forall\, \alpha$. The set of $v\in V$ that satisfy this condition (including the zero vector) is denoted by $V[\mu]$.

Let $\Gamma=\mathbb{Z}\Pi^+$ denote the root lattice and set $\Gamma_+=\mathbb{Z}_+\Pi^+$ ($\mathbb{Z}_+$ stands for the monoid of non-negative integers). For $\alpha,\beta \in \Lambda(V)$ we write $\alpha\prec \beta$ or $\beta \succ \alpha$ if $\beta-\alpha \in \Gamma_+\backslash \{0\}$.

$V$ is called a highest weight module if there is a vector $1_\nu$ of weight $\nu\in \Lambda(V)$ such that $V$ is cyclically generated by $v$ and $e_\alpha 1_\nu=0$ for all $\alpha \in \Pi$. Then $1_\nu$ is called highest vector and $\nu$ the highest weight. $V$ is also cyclicly generated by $1_\nu$ as a module over $U_q(\mathfrak{g}_-)$. It is a quotient of the Verma module of the same highest weight $\nu$, which is freely generated over $U_q(\mathfrak{g}_-)$ by its highest vector. Clearly $\mu\prec \nu$ for all $\mu\in \Lambda(V)$ distinct from $\nu$.

Recall that a non-zero weight vector of a $U_q(\mathfrak{g})$-module $V$ is called singular (extremal) if it is killed by all positive generators of~$U_q(\mathfrak{g})$. A singular vector $u$ of weight $\lambda$ defines a homomorphism of the corresponding Verma module extending the assignment $1_\lambda\mapsto u$ on its highest vector. The submodule in $V$ generated by a singular vector $u$ is a highest weight module with the highest vector $u$. We call it a highest weight submodule.

By $V^*$ we mean the $U_q(\mathfrak{h})$-graded restricted (left) dual to a highest weight $U_q(\mathfrak{g})$-module $V$. It is isomorphic to the right dual ${}^*\!V$ since $U_q(\mathfrak{g})$ is quasi-triangular. If $V$ is irreducible, then $V^*$ is cyclically generated by a vector of lowest weight, i.e., the one annihilated by all~$f_\alpha$, $\alpha \in \Pi$.

In what follows, $V$ and $Z$ are highest weight modules. They are assumed irreducible unless otherwise is explicitly stated. We reserve notation $\nu$ for the highest weight of $V$ and $\zeta$ for the highest weight of $Z$.
By $1_\nu$ and $1_\zeta$ we denote their highest weight generators. The vector subspace in $V\otimes Z$ spanned by singular vectors is denoted by $(V\otimes Z)^+$.

Denote by $V\circledast Z$ the sum of all highest weight submodules in $V\otimes Z$; it is generated by $(V\otimes Z)^+$. Let $u=\sum_{i}v_i\otimes z_i\in (V\otimes Z)^+$ be a singular vector, where $v_i$ and $z_i$ are weight vectors. Assuming the set of $v_i$ independent, we call $\{z_i\}$ $Z$-coefficients of $u$, with the lowest weight coefficient corresponding to~$1_\nu$. We define the $V$-coefficients of $u$ similarly.

\begin{Lemma}\label{duality}Singular vectors in $V\otimes Z$ are in bijection with $\mathrm{Hom}_{U_q(\mathfrak{g}_+)}(V^*,Z)$ and, alternatively, with $\mathrm{Hom}_{U_q(\mathfrak{g}_+)}({}^*\!Z,V)$.
\end{Lemma}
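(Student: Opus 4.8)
The plan is to exhibit the bijection explicitly and then verify it is well-defined and invertible. Given a singular vector $u=\sum_i v_i\otimes z_i\in (V\otimes Z)^+$ of weight $\lambda$, with $\{v_i\}$ linearly independent weight vectors, I would send it to the linear map $\phi_u\colon V^*\to Z$ defined on a homogeneous functional $\xi\in V^*$ by $\phi_u(\xi)=\sum_i \xi(v_i)z_i$, i.e., $\phi_u=(\mathrm{ev}\otimes\mathrm{id})\circ(\text{flip})$ applied to $u$ under the standard identification $V\otimes Z\simeq \mathrm{Hom}(V^*,Z)$ (valid since $V$ is $\mathfrak{h}$-graded with finite-dimensional weight spaces). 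The first thing to check is that $\phi_u$ is $U_q(\mathfrak{g}_+)$-linear. Here one uses that $\Delta(e_\alpha)=e_\alpha\otimes q^{h_\alpha}+1\otimes e_\alpha$ together with the dual action on $V^*$: writing out $e_\alpha u=0$ componentwise gives a relation among the $v_i\otimes z_i$ that, after pairing with $\xi$, translates precisely into $\phi_u(e_\alpha\xi)=e_\alpha\phi_u(\xi)$; the twist by $q^{h_\alpha}$ in the coproduct matches the grading shift built into the dual action on $V^*$, so the identity comes out cleanly.

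Conversely, given $\phi\in\mathrm{Hom}_{U_q(\mathfrak{g}_+)}(V^*,Z)$, pick a homogeneous basis $\{v_i\}$ of $V$ with dual basis $\{v_i^*\}\subset V^*$ and set $u_\phi=\sum_i v_i\otimes\phi(v_i^*)$; the sum is locally finite because $\phi$ preserves the $\mathfrak{h}$-weight up to the fixed shift $\nu-\lambda$, so only finitely many weight spaces contribute for a given target weight. One checks $u_\phi$ is a weight vector and that $e_\alpha u_\phi=0$ by reversing the computation above — this again is the $U_q(\mathfrak{g}_+)$-equivariance of $\phi$ read backwards. That $u\mapsto\phi_u$ and $\phi\mapsto u_\phi$ are mutually inverse is the standard tensor–hom adjunction and requires no new input. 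One should also note that $u$ being a genuine singular vector (nonzero) corresponds to $\phi$ being nonzero, and that the weight of $u$ determines, and is determined by, the grading shift of $\phi$.

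The second parametrization via $\mathrm{Hom}_{U_q(\mathfrak{g}_+)}({}^*Z,V)$ is obtained symmetrically, identifying $V\otimes Z\simeq\mathrm{Hom}({}^*Z,V)$ by pairing on the second factor: send $u=\sum_j v_j\otimes z_j$ (now with $\{z_j\}$ independent) to $\psi_u(\eta)=\sum_j \eta(z_j)v_j$ for $\eta\in{}^*Z$. The verification that $\psi_u$ is $U_q(\mathfrak{g}_+)$-linear is entirely parallel; the only asymmetry is that one uses the \emph{right} dual ${}^*Z$ here because the coproduct of $e_\alpha$ is not cocommutative, so the natural pairing identifying $V\otimes Z$ with a hom-space out of $Z$-side data is the right-dual one. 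Since $U_q(\mathfrak{g})$ is quasitriangular, ${}^*Z\simeq Z^*$ as noted in the preliminaries, so both hom-sets can be phrased with the restricted dual if one prefers.

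The main obstacle I anticipate is purely bookkeeping: getting the equivariance computation to close requires being careful about the induced action on duals and about which leg of the coproduct carries the $q^{h_\alpha}$ twist, and about finiteness of the sums defining $u_\phi$ (which rests on the weight spaces of a highest weight module being finite-dimensional and on $\Lambda(V)$ being bounded above by $\nu$). None of this is deep, but it is the only place where something could go wrong, so I would write that verification out in a couple of lines rather than leaving it to the reader.
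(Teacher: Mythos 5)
Your proposal is correct and follows essentially the same route as the paper: identify $u=\sum_i v_i\otimes z_i$ with the map $v^i\mapsto z_i$ on the restricted dual, check that singularity of $u$ is equivalent to $U_q(\mathfrak{g}_+)$-equivariance, and argue symmetrically with ${}^*\!Z$ for the second parametrization. The paper's proof is just a terser version of this; your extra care about the coproduct twist, finiteness of the sum $u_\phi$, and the weight bookkeeping fills in exactly the details the paper leaves to the reader.
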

\begin{proof}Suppose that $\{v_i\}$ form a basis in $V$. Let $\{v^i\}\subset V^*$ be the dual basis and $\{z_i\}\subset Z$ the set of coefficients for a~given singular vector $u$. Then the map $V^*\to Z$, $v^i\mapsto z_i$, is a~$U_q(\mathfrak{g}_+)$-module homomorphisms. It is clear that every singular vector is obtained this way. The case of $\mathrm{Hom}_{U_q(\mathfrak{g}_+)}({}^*\!Z,V)$ is similar.
\end{proof}

Obviously a $Z$-coefficient with a maximal weight in the expansion of $u$ is killed by $\{e_\alpha\}_{\alpha \in \Pi^+}$ and is singular in $Z$. Therefore, for irreducible $Z$, such a coefficient is equal to $1_\zeta$, up to a~non-zero scalar factor. Reciprocally, the $V$-coefficient of a maximal weight is singular in $V$ and therefore is a non-zero scalar multiple of $1_\nu$.

Let $I^-_Z\subset U_q(\mathfrak{g}_-)$ be the annihilator of $1_\zeta\in Z$ and put $I^+_Z=\sigma(I^-_Z)\subset U_q(\mathfrak{g}^+)$. We define left ideals $I^\pm_V \subset U_q(\mathfrak{g}^\pm)$ similarly. Denote by $V^+_Z\subset V$ and $Z^+_V\subset Z$ the kernels of $I^+_Z$ and $I^+_V$, respectively.
\begin{Proposition}\label{V-Z-extr}The vector subspace $V^+_Z$ is isomorphic to $Z^+_V$ via $V^+_Z\simeq (V\otimes Z)^+\simeq Z^+_V$.
\end{Proposition}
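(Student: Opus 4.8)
The plan is to realize each of the three spaces as a space of $U_q(\mathfrak{g}_+)$-module homomorphisms and then to apply Lemma~\ref{duality}. That lemma provides bijections $(V\otimes Z)^+\simeq\mathrm{Hom}_{U_q(\mathfrak{g}_+)}(V^*,Z)$ and $(V\otimes Z)^+\simeq\mathrm{Hom}_{U_q(\mathfrak{g}_+)}({}^*\!Z,V)$, which are moreover linear in the singular vector. Hence it suffices to establish
\[
\mathrm{Hom}_{U_q(\mathfrak{g}_+)}(V^*,Z)\simeq Z^+_V,\qquad \mathrm{Hom}_{U_q(\mathfrak{g}_+)}({}^*\!Z,V)\simeq V^+_Z,
\]
whereupon the chain $V^+_Z\simeq(V\otimes Z)^+\simeq Z^+_V$ drops out.

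The key step is to describe the $U_q(\mathfrak{g}_+)$-module structure of the restricted dual $V^*$. Since $V$ is irreducible, $V^*$ is cyclically generated over $U_q(\mathfrak{g}_+)$ by its lowest weight vector $\xi$ (the functional dual to $1_\nu$), so $V^*\simeq U_q(\mathfrak{g}_+)/\mathrm{Ann}_{U_q(\mathfrak{g}_+)}(\xi)$. To evaluate this annihilator I would twist by the algebra automorphism $\sigma$: on $V^\sigma$ (underlying space $V$, action $x\cdot v=\sigma(x)v$) the vector $1_\nu$ becomes a lowest weight vector of weight $-\nu$, the module $V^\sigma$ is again irreducible, and as a $U_q(\mathfrak{g}_+)$-module it is cyclic on $1_\nu$ with annihilator $\{x\in U_q(\mathfrak{g}_+):\sigma(x)1_\nu=0\}=\sigma(I^-_V)=I^+_V$. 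As $V^*$ and $V^\sigma$ are both irreducible lowest weight modules of lowest weight $-\nu$, and such a module is unique up to isomorphism, we obtain $V^*\simeq V^\sigma$, hence $\mathrm{Ann}_{U_q(\mathfrak{g}_+)}(\xi)=I^+_V$ and $V^*\simeq U_q(\mathfrak{g}_+)/I^+_V$. The same argument for $Z$, together with ${}^*\!Z\simeq Z^*$ (quasi-triangularity), gives ${}^*\!Z\simeq U_q(\mathfrak{g}_+)/I^+_Z$.

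It then remains to note that $\mathrm{Hom}_{U_q(\mathfrak{g}_+)}(U_q(\mathfrak{g}_+)/I^+_V,Z)$ is naturally the set of $z\in Z$ killed by $I^+_V$, i.e.\ $Z^+_V$, and likewise $\mathrm{Hom}_{U_q(\mathfrak{g}_+)}(U_q(\mathfrak{g}_+)/I^+_Z,V)=V^+_Z$; composing with Lemma~\ref{duality} yields the required isomorphisms and exhibits, concretely, a singular vector going over to its $Z$-coefficient at $1_\nu$ and to its $V$-coefficient at $1_\zeta$. I expect the only delicate point to be the identification $\mathrm{Ann}_{U_q(\mathfrak{g}_+)}(\xi)=\sigma(I^-_V)$: routing it through the $\sigma$-twist and irreducibility avoids any convention-sensitive bookkeeping with the antipode, whereas the direct route---pairing $x\xi$ with $y\,1_\nu$ for weight-opposite $x\in U_q(\mathfrak{g}_+)$, $y\in U_q(\mathfrak{g}_-)$ and tracking the $1_\nu$-coefficient of $\gamma^{-1}(x)\,y\,1_\nu$---is more cumbersome and I would not take it.
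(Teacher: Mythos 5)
Your proposal is correct and follows essentially the same route as the paper: both reduce the statement to Lemma~\ref{duality} together with the identification of $\mathrm{Hom}_{U_q(\mathfrak{g}_+)}({}^*\!Z,V)$ with $V^+_Z$ (and symmetrically), using that the dual of an irreducible highest weight module is cyclic on its lowest vector over $U_q(\mathfrak{g}_+)$. The only difference is that you spell out, via the $\sigma$-twist, why the annihilator of that lowest vector is exactly $I^+_Z=\sigma(I^-_Z)$ --- a detail the paper leaves implicit --- and your computation of it is correct.
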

\begin{proof}Since $V$ and $Z$ are irreducible, their dual modules are cyclicly generated by the lowest vectors. Therefore $\mathrm{Hom}_{U_q(\mathfrak{g}_+)}({}^*\!Z,V)=V^+_Z$ and $\mathrm{Hom}_{U_q(\mathfrak{g}_+)}({}^*\!V,Z)=Z^+_V$, so one is left to apply Lemma~\ref{duality}.
\end{proof}

Observe that the isomorphisms are implemented by the assignments $u\mapsto u_1 (u_2, {}^*\!1_{\zeta})\in V^+_Z$, and $u\mapsto ( u_1, {}^*\!1_{\nu}) u_2 \in Z^+_V$, where $u=u^1\otimes u^2\in (V\otimes Z)^+$ in the Sweedler notation. These assignments involve the invariant pairing with the lowest vectors ${}^*\!1_{\nu}$, ${}^*\!1_{\zeta}$ (of weights~$-\nu$ and~$-\zeta$) of the dual modules and return the coefficients of lowest weights. We call them leading coefficients. Other coefficients in the expansion of a given singular vectors are obtained from them by the action of~$U_q(\mathfrak{g}_+)$.

We denote the inverse isomorphisms by $\delta_l \colon V^+_Z\to (V\otimes Z)^+$ and $\delta_r\colon Z^+_V\to (V\otimes Z)^+$.

\begin{Example}Suppose $\mathfrak{k}$ is a Levi subalgebra in $\mathfrak{g}$ and let $\Pi^+_\mathfrak{k}\subset \Pi^+_\mathfrak{g}$ be its basis of simple roots. Consider $Z$ to be a scalar parabolic Verma module corresponding to a one-dimensional representation of $U_q(\mathfrak{k})$. The left ideal $I^+_Z$ is then generated by $\{e_\alpha\}_{\alpha\in \Pi^+_\mathfrak{k}}$, and $V^+_Z$ becomes the subspace of $U_q(\mathfrak{k})$-singular vectors in $V$.
\end{Example}

\section{Contravariant forms}
Let $V$ be a $U_q(\mathfrak{g})$-module. A symmetric bilinear form $\langle \cdot,\cdot\rangle$ on $V$ is called contravariant if for all $v,w\in V$ and all $x\in U_q(\mathfrak{g})$ it satisfies $\langle x v, w\rangle=\langle v,\omega(x)w\rangle$. The following properties of contravariant forms readily follow.
\begin{Proposition}\label{form_features} \quad
\begin{enumerate}\itemsep=0pt
 \item[$i)$] The kernel of the form is a submodule in $V$.
 \item[$ii)$] Weight subspaces $V[\mu]$ and $V[\nu]$ for different $\mu,\nu\in \Lambda(V)$ the are orthogonal.
 \item[$iii)$] Two highest weight submodules are orthogonal if and only if their canonical generators are orthogonal.
 \end{enumerate}
\end{Proposition}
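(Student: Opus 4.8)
The plan is to obtain (i) and (ii) straight from contravariance and to put the effort into (iii).

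For (i): if $v$ lies in the kernel $K$ of the form, then $\langle xv,w\rangle=\langle v,\omega(x)w\rangle=0$ for every $x\in U_q(\mathfrak{g})$ and $w\in V$, so $xv\in K$; thus $K$ is a submodule. For (ii): the anti-automorphism $\omega=\gamma^{-1}\circ\sigma$ fixes each $q^{h_\alpha}$, because $\sigma(q^{h_\alpha})=q^{-h_\alpha}$ and $\gamma^{-1}(q^{-h_\alpha})=q^{h_\alpha}$. Hence for $v\in V[\mu]$, $w\in V[\nu]$ and any simple root $\alpha$ we get $q^{(\alpha,\mu)}\langle v,w\rangle=\langle q^{h_\alpha}v,w\rangle=\langle v,q^{h_\alpha}w\rangle=q^{(\alpha,\nu)}\langle v,w\rangle$. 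If $\mu\neq\nu$ then $\mu-\nu$ is a non-zero element of the root lattice, so $(\alpha,\mu-\nu)\neq0$ for some $\alpha$, and $q^{(\alpha,\mu-\nu)}\neq1$ since $q$ is not a root of unity; therefore $\langle v,w\rangle=0$.

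For (iii) the ``only if'' direction is immediate. Conversely, let $M$, $N$ be highest weight submodules with canonical generators $u$ of weight $\lambda$ and $u'$ of weight $\lambda'$, and assume $\langle u,u'\rangle=0$; the goal is $\langle M,N\rangle=0$. The key step is to show $u\perp N$. Since $N$ is cyclically generated over $U_q(\mathfrak{g}_-)$ by $u'$, every element of $N$ has the form $bu'$ with $b\in U_q(\mathfrak{g}_-)$, and by contravariance (together with $\omega^2=\mathrm{id}$) one has $\langle u,bu'\rangle=\langle\omega(b)u,u'\rangle$. Split $b=b_0+b'$ into its degree-zero component $b_0\in\mathbb{C}$ and the component $b'$ of positive degree. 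Because $\omega$ is an anti-automorphism whose value on $f_\alpha$ has $e_\alpha$ as rightmost factor (indeed $\omega(f_\alpha)=-q^{-h_\alpha}e_\alpha$, a direct check from the formulas for $\sigma$ and $\gamma$), the element $\omega(b')$ is a linear combination of monomials each ending on the right with some $\omega(f_\beta)$, and hence $\omega(b')u=0$ since $u$ is killed by every $e_\beta$. Therefore $\omega(b)u=b_0u$ and $\langle u,bu'\rangle=b_0\langle u,u'\rangle=0$, so $u\perp N$. Finally, for any $x\in U_q(\mathfrak{g}_-)$ and $n\in N$, contravariance gives $\langle xu,n\rangle=\langle u,\omega(x)n\rangle=0$ because $\omega(x)n\in N$; since $M=U_q(\mathfrak{g}_-)u$, this proves $\langle M,N\rangle=0$.

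The real content is item (iii), and the delicate case is $\lambda\prec\lambda'$: then $N$ can contain non-zero vectors of weight $\lambda$, against which $\langle u,u'\rangle=0$ gives no direct information, and the argument succeeds only because $\omega$ turns the lowering operators constituting $b$ into raising operators that annihilate the singular vector $u$. Once $u\perp N$ is secured, upgrading to $\langle M,N\rangle=0$ via contravariance and the cyclicity of $M$ over $U_q(\mathfrak{g}_-)$ is purely formal.
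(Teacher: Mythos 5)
Your proof is correct. The paper offers no proof at all for this proposition (it is stated as ``readily following'' from contravariance), and your argument supplies exactly the standard details one would expect: (i) and (ii) are immediate from $\langle xv,w\rangle=\langle v,\omega(x)w\rangle$ and $\omega\big(q^{h_\alpha}\big)=q^{h_\alpha}$, and for (iii) the key computation $\omega(f_\alpha)=-q^{-h_\alpha}e_\alpha$, so that $\omega(b)$ for $b\in U_q(\mathfrak{g}_-)$ of positive degree annihilates a singular vector, is precisely the right mechanism for reducing $\langle M,N\rangle$ to $\langle u,u'\rangle$ via cyclicity of highest weight modules over $U_q(\mathfrak{g}_-)$.
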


\looseness=-1 It is known that every module of highest weight is equipped with a unique, up to a scalar multiplier, contravariant form, which is non-degenerate if and only if the module is irreducible. When~$V$ is a Verma module, it is called Shapovalov form~\cite{Sh}, which name we apply to all highest weight modules. The Shapovalov form is equivalent to an invariant pairing $V\otimes {}^*\!V\to \mathbb{C}$ via the linear isomorphism $\sigma \colon V\to {}^*\!V$. We will work with both contravariant forms and invariant pairings.

Now we return to the settings of the previous section.
\begin{Definition} Let $V$ and $Z$ be two highest weight modules. By {\em canonical contravariant form} on $V\otimes Z$ we mean the product of the Shapovalov forms on $V$ and $Z$.
\end{Definition}

\looseness=-1 Denote by $V^\perp_Z\subset V$ the vector subspace $\omega(I^+)V$, i.e., the image of the right ideal $\omega(I^+_Z)\subset U_q(\mathfrak{b}_-)$. By Lemma~\ref{inv-ideals}, it is equal to image of the right ideal $\tilde \omega (I^+_Z)\subset U_q(\mathfrak{g}_-)$. It is exactly the annihilator of $V^+_Z$, so the quotient ${}^+\!V_Z=V/V^\perp_Z$ is isomorphic to the dual vector space to $V^+_Z$ with respect to the form. We identify ${}^+\!V_Z$ with a fixed vector subspace in $V$ that is transversal to $V^\perp_Z$.

Define extremal twist operator $\theta_{V,Z}\colon V^+_Z\to {}^+\!V_Z$ as follows. For $v\in V^+_Z[\xi]$ write $u=\delta_l(v)$ as $u=\sum_{\alpha\in \Gamma^+}\sum_i v_{\alpha,i}\otimes f_{\alpha,i} 1_\zeta$, where $v_{\alpha,i}\in V$ are vectors of weight $\xi+\alpha$ and $f_{\alpha,i}\in U_q(\mathfrak{g}_-)$ are some elements of weight $-\alpha$. Then put $\theta_{V,Z}(v)$ equal to the image of $\sum_{\alpha\in \Gamma^+}\sum_{i}\gamma^{-1}(f_{\alpha,i})v_{\alpha,i}$ in ${}^+\!V_Z$ projected along $V^\perp_Z$. This image is independent of the choice of $f_{\alpha,i}$. Indeed, the if $f'_{\alpha,i}\in U_q(\mathfrak{g}_-)$ is another element such that $f_{\alpha,i}'1_\zeta=f_{\alpha,i}1_\zeta$, then $\psi_{\alpha,i}=f_{\alpha,i}'-f_{\alpha,i}$ belongs to $I^-_Z$, and $\gamma^{-1}(\psi_{\alpha,i})=(\omega \circ \sigma) (\psi_{\alpha,i})\in \omega(I^+_Z)$. Therefore $f_{\alpha,i}'v_i=f_{\alpha,i}v_i \mod V^\perp_Z$ as required. Further on we will suppress the index $i$ replacing $\sum_iv_{\alpha,i}\otimes f_{\alpha,i}$ with $ v_{\alpha}\otimes f_{\alpha}$ in the Sweedler-like fashion.

Recall that there are natural $U_q(\mathfrak{g}_\pm)$-module epimorphisms $U_q(\mathfrak{g}_+)\to {}^*\!Z$ and $U_q(\mathfrak{g}_-)\to Z$, where the algebras act on themselves by left multiplication. Extend the tensor product ${}^*\!Z\otimes Z$ by infinite formal sums $\sum_{\xi\in \Lambda({}^*\!Z), \zeta\in \Lambda(Z)}{}^*\!Z[\xi]\hat \otimes Z[\zeta]$. Let $\mathcal{S}$ denote the $U_q(\mathfrak{g})$-invariant image of $1$ under the coevaluation morphism $\mathbb{C}\to {}^*\!Z\hat \otimes Z$. Using a linear section of the module map $U_q(\mathfrak{g}_+)\hat \otimes U_q(\mathfrak{g}_-)\to {}^*\!Z\hat \otimes Z$, we pick up a lift $\mathcal{F}\in U_q(\mathfrak{g}_+)\hat \otimes U_q(\mathfrak{g}_-)$ of~$\mathcal{S}$.
\begin{Lemma}The map $\delta_l\colon V^+_Z\to (V\otimes Z)^+$ acts by the assignment $v\mapsto \mathcal{F}_1v\otimes \mathcal{F}_21_\zeta$, in the Sweedler symbolic notation for tensor factors.
\end{Lemma}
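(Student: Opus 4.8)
The plan is to unwind the definition of $\delta_l$ through Lemma~\ref{duality} and Proposition~\ref{V-Z-extr}, and then transport the resulting formula along the module epimorphisms used to build $\mathcal{F}$. Recall from the proof of Lemma~\ref{duality} that a singular vector $u=\sum_i v_i\otimes z_i\in (V\otimes Z)^+$, expanded over a weight basis $\{z_i\}$ of $Z$ with restricted dual basis $\{z^i\}\subset{}^*\!Z$, is recovered from the $U_q(\mathfrak{g}_+)$-homomorphism $\phi\colon{}^*\!Z\to V$, $z^i\mapsto v_i$, as $u=(\phi\otimes\mathrm{id})(\mathcal{S})$; indeed $\mathcal{S}=\sum_i z^i\otimes z_i$ in this basis, being the image of $1$ under coevaluation. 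Under the identification $\mathrm{Hom}_{U_q(\mathfrak{g}_+)}({}^*\!Z,V)=V^+_Z$ of Proposition~\ref{V-Z-extr}, the homomorphism $\phi$ is determined by its value $\phi({}^*\!1_\zeta)$ on the lowest vector ${}^*\!1_\zeta$, which is exactly the leading ($1_\zeta$-)coefficient of $u$. Hence $\delta_l$, being the inverse isomorphism, sends $v\in V^+_Z$ to $(\phi_v\otimes\mathrm{id})(\mathcal{S})$, where $\phi_v\colon{}^*\!Z\to V$ is the unique $U_q(\mathfrak{g}_+)$-linear map with $\phi_v(x\,{}^*\!1_\zeta)=xv$ for $x\in U_q(\mathfrak{g}_+)$; this is well defined precisely because the annihilator of ${}^*\!1_\zeta$ in $U_q(\mathfrak{g}_+)$ is $I^+_Z$, which kills $v$ by the definition of $V^+_Z$. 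If one wants an independent verification that $(\phi_v\otimes\mathrm{id})(\mathcal{S})$ is singular, apply $\Delta(e_\alpha)=e_\alpha\otimes q^{h_\alpha}+1\otimes e_\alpha$ and combine the $U_q(\mathfrak{g}_+)$-equivariance of $\phi_v$ with the invariance relation $(e_\alpha\otimes q^{h_\alpha}+1\otimes e_\alpha)\mathcal{S}=0$ of the coevaluation element.

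Granting this description, the statement is a one-line computation. Writing the lift as $\mathcal{F}=\mathcal{F}_1\otimes\mathcal{F}_2\in U_q(\mathfrak{g}_+)\hat\otimes U_q(\mathfrak{g}_-)$ in the symbolic Sweedler notation, its defining property is that it maps to $\mathcal{S}$ under the module epimorphisms $U_q(\mathfrak{g}_+)\to{}^*\!Z$, $x\mapsto x\,{}^*\!1_\zeta$, and $U_q(\mathfrak{g}_-)\to Z$, $y\mapsto y\,1_\zeta$; that is, $(\mathcal{F}_1\,{}^*\!1_\zeta)\otimes(\mathcal{F}_2\,1_\zeta)=\mathcal{S}$. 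Applying $\phi_v\otimes\mathrm{id}$ and using $\phi_v(\mathcal{F}_1\,{}^*\!1_\zeta)=\mathcal{F}_1 v$ then yields $\delta_l(v)=(\phi_v\otimes\mathrm{id})(\mathcal{S})=\mathcal{F}_1 v\otimes\mathcal{F}_2 1_\zeta$, which is the claim.

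Two minor points deserve a line. First, the right-hand side does not depend on the choice of lift: $\mathcal{F}$ is determined only modulo the kernel of these epimorphisms, namely $I^+_Z\hat\otimes U_q(\mathfrak{g}_-)+U_q(\mathfrak{g}_+)\hat\otimes I^-_Z$, and both summands annihilate $v\otimes 1_\zeta$, the first because $I^+_Z v=0$ and the second because $I^-_Z 1_\zeta=0$; this matches the fact that $\delta_l(v)$ is unambiguous. Second, although $\mathcal{F}$ is an infinite formal sum, $\mathcal{F}_1 v\otimes\mathcal{F}_2 1_\zeta$ really lies in $V\otimes Z$ and not merely in a completion: for $v$ of a fixed weight $\xi$ it is concentrated in the weight space of weight $\xi+\zeta$, and only the finitely many nonzero summands of $\bigoplus_\beta V[\xi+\beta]\otimes Z[\zeta-\beta]$ contribute, since $\Lambda(V)$ is bounded above by $\nu$.

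The argument is essentially bookkeeping. The one step I would treat as the crux is the opening identification --- matching the abstract $\mathrm{Hom}$-parametrisation of singular vectors from Lemma~\ref{duality} with the concrete coevaluation element $\mathcal{S}$, and checking that the ``leading coefficient'' normalisation agrees on both sides, which amounts to the observation that the weight-zero component of $\mathcal{F}$ is $1\otimes 1$.
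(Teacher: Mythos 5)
Your proof is correct and follows essentially the same route as the paper: both transport the invariant coevaluation element $\mathcal{S}\in{}^*\!Z\hat\otimes Z$ through the $U_q(\mathfrak{g}_+)$-homomorphism ${}^*\!Z\to V$ determined by $v$, use the lift $\mathcal{F}$ of $\mathcal{S}$ to write the image as $\mathcal{F}_1v\otimes\mathcal{F}_21_\zeta$, and identify the leading coefficient with $v$ via the pairing against ${}^*\!1_\zeta$. Your additional checks (independence of the lift, finiteness of the sum, normalisation of the weight-zero component) are sound but not needed beyond what the paper records.
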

\begin{proof} Indeed, ${}^*\!Z\hat \otimes Z$ is a $U_q(\mathfrak{g})$-module, and the inverse form $\mathcal{S}\in {}^*\!Z\hat \otimes Z$ is $U_q(\mathfrak{g})$-invariant. The homomorphism ${}^*\!Z \otimes Z\to V\otimes Z$ of $U_q(\mathfrak{g}_+)$-modules determined by $v\in V^+_Z$ extends to a~homomorphism ${}^*\!Z\hat \otimes Z\to V\otimes Z$ that sends $\mathcal{S}$ to $u=\mathcal{F}_1v\otimes \mathcal{F}_21_\zeta$. Therefore $u$ is $U_q(\mathfrak{g}_+)$-invariant and hence singular. Evaluating the invariant inner product of the $Z$-component of $\mathcal{F}_1v\otimes \mathcal{F}_21_\zeta$ with ${}^*\!1_\zeta$ we get $v$, which proves the statement.
\end{proof}

The element $\Upsilon_Z = \gamma^{-1}(\mathcal{F}_2)\mathcal{F}_1$ belongs to a certain extension of $U_q(\mathfrak{g})$ and defines a $U_q(\mathfrak{h})$-linear map $V^+_Z\to V$. Remark that $\Upsilon_Z$ controls the transformation of the antipode under the quasi-Hopf twist of $U_q(\mathfrak{g})$ by $\mathcal{F}$~\cite{D1}. These elements for $Z$ a Verma module were introduced in~\cite{EV} where they participated in construction of dynamical Weyl group. For the case of parabolic modules they appeared in \cite{KM} in connection with dynamical twist over a non-abelian base.

Observe that right hand side of the equality $\delta_l(v)=\mathcal{F}(v\otimes 1_\zeta)$, $v\in V^+_Z$, is independent of the choice of $\mathcal{F}$. Any other lift differs from $\mathcal{F}$ by an element from $U_q(\mathfrak{g})\otimes I_Z^-+ I^+_Z \otimes U_q(\mathfrak{g})$, which kills $V^+_Z\otimes 1_\zeta$. For all $v,w\in V^+_Z$, one has $\langle \theta_{V,Z}(v),w\rangle=\langle \Upsilon_Z(v),w\rangle$. The right-hand side here is independent of a particular lift, because $U_q(\mathfrak{g})I^+_Z$ kills $V^+_Z$ and $\gamma^{-1}(I^-_Z)U_q(\mathfrak{g})V= \omega(I^+_Z)V$ is orthogonal to $V^+_Z$.

\begin{Proposition} The form $\langle \theta_{V,Z}(\cdot),\cdot \rangle$ is the pullback of the canonical form restricted to \linebreak $(V \otimes Z)^+$ under the isomorphism $V^+_Z\to (V\otimes Z)^+$.
\end{Proposition}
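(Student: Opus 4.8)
The plan is to compute the pullback form $(v,w)\mapsto\langle\delta_l(v),\delta_l(w)\rangle_{V\otimes Z}$ directly and to identify it with $\langle\Upsilon_Z(v),w\rangle$, which by the identity $\langle\theta_{V,Z}(v),w\rangle=\langle\Upsilon_Z(v),w\rangle$ noted above equals $\langle\theta_{V,Z}(v),w\rangle$. Both forms being bilinear, I would take $v,w\in V^+_Z$ to be weight vectors; if their weights differ both sides vanish by weight orthogonality (Proposition~\ref{form_features}), so assume the weights agree. Using $\delta_l(v)=\mathcal{F}_1 v\otimes\mathcal{F}_2 1_\zeta$, writing $\delta_l(w)=\mathcal{F}'(w\otimes 1_\zeta)$ with a second copy $\mathcal{F}'$ of the lift (the right-hand side is lift-independent), and recalling that the canonical form is the product of the Shapovalov forms, I would write
\begin{gather*}
\langle\delta_l(v),\delta_l(w)\rangle_{V\otimes Z}=\langle\mathcal{F}_1 v,\mathcal{F}'_1 w\rangle_V\,\langle\mathcal{F}_2 1_\zeta,\mathcal{F}'_2 1_\zeta\rangle_Z
\end{gather*}
with the sums over Sweedler components understood; these are graded, hence locally finite, sums (for a fixed weight of $\mathcal{F}_2 1_\zeta$ only finitely many $\mathcal{F}'$ contribute, by weight orthogonality in $Z$), so the rearrangements below are harmless.

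The crux is to contract the $Z$-factor. The element $\mathcal{F}$ is a lift of the invariant element $\mathcal{S}\in{}^*\!Z\hat\otimes Z$ along the epimorphisms $U_q(\mathfrak{g}_+)\to{}^*\!Z$, $x\mapsto x\,{}^*\!1_\zeta$, and $U_q(\mathfrak{g}_-)\to Z$, $y\mapsto y1_\zeta$, so under the Sweedler identification $\mathcal{F}_1{}^*\!1_\zeta$ stands for $\mathcal{S}_1$ and $\mathcal{F}_2 1_\zeta$ for $\mathcal{S}_2$. Since $\mathcal{S}$ is the coevaluation element and the Shapovalov form on $Z$ is the invariant pairing transported along $\sigma\colon Z\to{}^*\!Z$, the snake identity gives $\langle z,\mathcal{S}_2\rangle_Z\,\mathcal{S}_1=\sigma(z)$ in ${}^*\!Z$ for every $z\in Z$. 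Applying to this the $U_q(\mathfrak{g}_+)$-module map ${}^*\!Z\simeq U_q(\mathfrak{g}_+)/I^+_Z\to V$ that sends ${}^*\!1_\zeta$ to $w$ — available precisely because $I^+_Z$ annihilates $w\in V^+_Z$ — and using that $\sigma\colon Z\to{}^*\!Z$ is induced by the algebra automorphism $\sigma$ (consistently, as $I^+_Z=\sigma(I^-_Z)$), so that $\sigma(y1_\zeta)=\sigma(y){}^*\!1_\zeta$ with $\sigma(y)\in U_q(\mathfrak{g}_+)$, I obtain, summed over $\mathcal{F}'$,
\begin{gather*}
\langle\mathcal{F}_2 1_\zeta,\mathcal{F}'_2 1_\zeta\rangle_Z\,\mathcal{F}'_1 w=\sigma(\mathcal{F}_2)\,w .
\end{gather*}
Substituting this back collapses the double sum to $\sum_{\mathcal{F}}\langle\mathcal{F}_1 v,\sigma(\mathcal{F}_2)w\rangle_V$.

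It then remains to move $\sigma(\mathcal{F}_2)$ across the Shapovalov form on $V$. By contravariance $\langle\mathcal{F}_1 v,\sigma(\mathcal{F}_2)w\rangle_V=\langle\omega(\sigma(\mathcal{F}_2))\mathcal{F}_1 v,w\rangle_V$, and $\omega\circ\sigma=\gamma^{-1}$ because $\omega=\gamma^{-1}\circ\sigma$ and $\sigma^2=\mathrm{id}$; hence the sum becomes $\sum_{\mathcal{F}}\langle\gamma^{-1}(\mathcal{F}_2)\mathcal{F}_1 v,w\rangle_V=\langle\Upsilon_Z(v),w\rangle=\langle\theta_{V,Z}(v),w\rangle$, which is the assertion. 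I expect the one genuinely delicate point to be the $Z$-contraction identity above: it requires careful bookkeeping of the dual and antipode conventions under which $\mathcal{S}$ is inverse to the Shapovalov form and $\sigma\colon Z\to{}^*\!Z$ is compatible with the quotient maps used — conventions that are exactly the ones fixed earlier in the paper. Everything else is formal, relying only on the locally finite grading and on the stated properties of $\omega$, $\sigma$, $\gamma$ and $\mathcal{F}$.
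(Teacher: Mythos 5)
Your proof is correct and follows essentially the same route as the paper's: both factor the canonical form into the two Shapovalov forms, transfer the $U_q(\mathfrak{g}_-)$-action on the $Z$-leg to a $U_q(\mathfrak{g}_+)$-action on the leading $V$-coefficient of the second singular vector, and finish with contravariance on $V$ together with $\omega\circ\sigma=\gamma^{-1}$. The only difference is packaging: you phrase the middle step via the coevaluation element $\mathcal{S}$ and the snake identity, while the paper carries out the same contraction by hand on the explicit weight-component expansions of the two singular vectors.
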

\begin{proof}Choose extremal vectors $u, \tilde u \in (V\otimes Z)^+$ of weight $\eta +\zeta$ and $\xi+\zeta$, respectively. Expanding them over the positive hull of the root lattice we write $u=\sum_{\alpha\geqslant0}v_{\eta+\alpha}\otimes f_\alpha1_\zeta =\delta_l(v_\eta)$ and $\tilde u=\sum_{\beta\geqslant0} \tilde v_{\xi+\beta}\otimes \tilde z_{\beta-\zeta}=\delta_l(\tilde v_\xi)$ using the Sweedler-like convention for suppressing summation. Then{\samepage
\begin{gather*}
\langle u, \tilde u\rangle =\sum_{\alpha,\beta\geqslant0}\langle v_{\eta+\alpha},\tilde v_{\xi+\beta}\rangle \langle f_\alpha 1_\zeta, \tilde z_{\beta-\zeta}\rangle
=\sum_{\alpha,\beta\geqslant0}\langle v_{\eta+\alpha},\tilde v_{\xi+\beta}\rangle \big\langle 1_\zeta, \gamma^{-1}\circ \sigma(f_\alpha)\tilde z_{\beta-\zeta}\big\rangle\\
\hphantom{\langle u, \tilde u\rangle}{} =\sum_{\alpha,\beta\geqslant0}\langle v_{\eta+\alpha},\sigma(f_\alpha) \tilde v_{\xi+\beta}\rangle \langle 1_\zeta, \tilde z_{\beta-\zeta}\rangle
=\left\langle \sum_{\alpha\geqslant0}\gamma^{-1}(f_\alpha) v_{\eta+\alpha},\tilde v_{\xi}\right\rangle
=\langle \theta_{V,Z}(v_{\eta}),\tilde v_{\xi}\rangle
\end{gather*}
as required.}
\end{proof}

\section[Height filtration on $V\otimes Z$]{Height filtration on $\boldsymbol{V\otimes Z}$}

Since all weights in a module $V$ of highest weight $\nu$ belong to $\nu-\Gamma^+$ we can speak of height $\mathrm{ht}(\xi)\in \mathbb{Z}_+$ of $\xi\in \Lambda(V)$ defining it as the sum of coordinates of $\nu-\xi$ with respect to the basis~$\Pi^+$. We define $\mathrm{ht}(\mu)$ for a weight $\mu\in \Lambda(V\otimes Z)$ similarly by setting it equal to the sum of coordinates of $\nu+\zeta-\mu\in \Gamma_+$. A vector is said to be of height $k$ if it carries a weight of the corresponding height. The assignment $V\ni v\mapsto v\otimes 1_\zeta\in V\otimes Z$ is height-preserving.

For $k\in \mathbb{Z}_+$ denote by $(V\otimes Z)_k$ the submodule in $V\otimes Z$ generated by vectors of height $\leqslant k$ from $V\otimes 1_\zeta$. The submodules $(V\otimes Z)_k$ form an increasing filtration of $V\otimes Z$.
\begin{Lemma}\label{diagonal}Fix $\xi\in \Lambda(V)$, $\eta\in \Lambda(Z)$, $\beta\in -\Gamma_+$ and put $\mu=\xi+\beta+\eta \in\Lambda(V\otimes Z)$. Then for every $\psi \in U_q(\mathfrak{g}_-)[\beta]$, $v_\xi\in V[\xi]$, and $z_\eta\in Z[\eta]$, one has $\gamma^{-1}(\psi) v_\xi\otimes z_\eta = v_\xi\otimes \psi z_\eta \mod (V\otimes Z)_{k-1}$ with $k=\mathrm{ht}(\mu)$.
\end{Lemma}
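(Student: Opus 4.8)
The plan is to prove the congruence by expanding $\psi$ as a sum of monomials in the $f_\alpha$ and using the comultiplication rule $\Delta(f_\alpha) = f_\alpha \otimes 1 + q^{-h_\alpha}\otimes f_\alpha$ together with the antipode formula $\gamma^{-1}(f_\alpha) = -f_\alpha q^{-h_\alpha}$ (the inverse of $\gamma(f_\alpha) = -q^{h_\alpha}f_\alpha$), proceeding by induction on $\mathrm{ht}(\mu)$, equivalently on the degree $|\beta|$ of $\psi$. The base case $\beta = 0$ is trivial since then $\psi$ is a scalar times a group-like element of weight $0$, which acts on a weight vector by a scalar that is the same on both tensor factors after applying $\gamma^{-1}$ (here one uses $\mu(h_0)=0$). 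So the real content is the inductive step.

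First I would reduce to $\psi = f_\alpha \psi'$ with $\psi'\in U_q(\mathfrak{g}_-)[\beta+\alpha]$ and $\alpha\in\Pi^+$, so that $\mathrm{ht}$ drops by one when we pass from $\psi$ to $\psi'$. The key computation is to act with $f_\alpha$, viewed as a coproduct, on the vector $v_\xi \otimes \psi' z_\eta$: using $\Delta(f_\alpha)$,
\begin{gather*}
f_\alpha\cdot(v_\xi\otimes \psi' z_\eta) = f_\alpha v_\xi \otimes \psi' z_\eta + q^{-h_\alpha}v_\xi \otimes f_\alpha\psi' z_\eta = q^{-(\alpha,\xi+\beta+\alpha)}v_\xi\otimes \psi z_\eta + f_\alpha v_\xi\otimes \psi' z_\eta,
\end{gather*}
where on the second term $q^{-h_\alpha}$ acts on the weight vector $v_\xi$ by the scalar $q^{-(\alpha,\xi)}$ and this combines with the weight $\beta+\alpha$ of $\psi'z_\eta$; I will track the exact exponent carefully. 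The first summand on the left-hand side, $f_\alpha\cdot(v_\xi\otimes\psi'z_\eta)$, lies in $(V\otimes Z)_{k-1}$ because by induction $v_\xi\otimes\psi' z_\eta \equiv \gamma^{-1}(\psi')v_\xi\otimes z_\eta$ modulo lower filtration, and the latter is a module element generated by a vector of height $k-1$ from $V\otimes 1_\zeta$, hence stays in $(V\otimes Z)_{k-1}$, which is a submodule and so is closed under the action of $f_\alpha$. The term $f_\alpha v_\xi\otimes\psi'z_\eta$ is likewise handled by induction: $f_\alpha v_\xi$ has weight $\xi-\alpha$ of height one more in $V$, but the point is that $v_\xi \otimes \psi' z_\eta$ already sits in height $k-1$, and multiplying the $V$-factor by $f_\alpha$ while adjusting the $Z$-factor keeps us controlled — more precisely I will rearrange to isolate $v_\xi\otimes\psi z_\eta$ and read off that it equals $\gamma^{-1}(\psi)v_\xi\otimes z_\eta$ up to the scalar bookkeeping and a term in $(V\otimes Z)_{k-1}$, using $\gamma^{-1}(\psi) = \gamma^{-1}(\psi')\gamma^{-1}(f_\alpha) = -\gamma^{-1}(\psi')f_\alpha q^{-h_\alpha}$ and matching the $q$-powers.

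The main obstacle I anticipate is precisely the bookkeeping of the $q^{-h_\alpha}$ factors and verifying that the scalar produced by $\gamma^{-1}(\psi)$ acting on $v_\xi$ matches the scalar $q^{-(\alpha,\xi+\beta+\alpha)}$ (and its iterated analogues) coming from the coproduct expansion, so that everything assembles into the clean congruence with no leftover scalar. A secondary subtlety is to make sure that at each stage the ``error'' terms genuinely land in $(V\otimes Z)_{k-1}$ and not merely in some larger submodule: this rests on the observation that $(V\otimes Z)_{k-1}$ is generated by height $\leqslant k-1$ vectors of $V\otimes 1_\zeta$ and is a submodule, so it absorbs the full $U_q(\mathfrak{g})$-action, in particular the action of $f_\alpha$ and of $q^{\pm h_\alpha}$; combined with the inductive hypothesis rewriting $v\otimes\psi'z$ as an element of the submodule generated by $\gamma^{-1}(\psi')v\otimes 1_\zeta$, this closes the argument. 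Once the single-generator step $\psi=f_\alpha\psi'$ works, linearity in $\psi$ extends it to all of $U_q(\mathfrak{g}_-)[\beta]$, completing the proof.
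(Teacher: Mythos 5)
Your strategy is the same as the paper's: induction on the degree of a monomial $\psi=f_\alpha\psi'$, acting with $\Delta(f_\alpha)$ on $v_\xi\otimes\psi'(\cdot)$, and absorbing terms into $(V\otimes Z)_{k-1}$. However, as written there is a circular step. You place $f_\alpha\cdot(v_\xi\otimes\psi'z_\eta)$ into $(V\otimes Z)_{k-1}$ on the grounds that, by induction, $v_\xi\otimes\psi'z_\eta\equiv\gamma^{-1}(\psi')v_\xi\otimes z_\eta$ and that the latter is ``a module element generated by a vector of height $k-1$ from $V\otimes 1_\zeta$''. That last assertion is not available: for $z_\eta\neq 1_\zeta$ the pure tensor $w\otimes z_\eta$ is not visibly in the cyclic submodule generated by $w\otimes 1_\zeta$, precisely because the coproduct entangles the two factors; the claim that every vector of height $\leqslant k-1$ lies in $(V\otimes Z)_{k-1}$ is Corollary~\ref{two-filtrations}(i), which is \emph{deduced from} this lemma. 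The paper avoids this by running the induction with $z_\eta=1_\zeta$: then the inductive congruence $v_\xi\otimes\psi'1_\zeta\equiv\gamma^{-1}(\psi')v_\xi\otimes 1_\zeta$ has right-hand side in $V\otimes 1_\zeta$ of height $k-1$, hence in $(V\otimes Z)_{k-1}$ by the very definition of the filtration. The general case $z_\eta=\phi 1_\zeta$ then follows by applying the $1_\zeta$-case to the pairs $(v_\xi,\psi\phi)$ and $\big(\gamma^{-1}(\psi)v_\xi,\phi\big)$, both of which have the same total height $k$. You should make this reduction explicit (or set up a double induction); note also that your alternative of inducting on $\mathrm{ht}(\mu)$ does not by itself reach the term $f_\alpha v_\xi\otimes\psi'z_\eta$, whose weight is again $\mu$ of height $k$ --- it is the drop in $\deg\psi$ that handles it.

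Separately, the bookkeeping you defer would not close with the formulas you wrote down. From $\gamma(f_\alpha)=-q^{h_\alpha}f_\alpha$ one gets $\gamma^{-1}(f_\alpha)=-f_\alpha q^{h_\alpha}$, not $-f_\alpha q^{-h_\alpha}$; and $q^{-h_\alpha}$ acts on $v_\xi$ by $q^{-(\alpha,\xi)}$, not $q^{-(\alpha,\xi+\beta+\alpha)}$ (the weight of $\psi'z_\eta$ is irrelevant to that factor). With the correct formulas, $\gamma^{-1}(f_\alpha)v_\xi=-q^{(\alpha,\xi)}f_\alpha v_\xi$ exactly cancels the $q^{-(\alpha,\xi)}$ coming from the coproduct --- this is the content of the paper's rewriting $\Delta(f_\alpha)=-\gamma^{-1}(f_\alpha)q^{-h_\alpha}\otimes 1+q^{-h_\alpha}\otimes f_\alpha$. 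With your version the two scalars differ by $q^{2(\alpha,\xi)}$ and the induction would not assemble into the stated congruence.
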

\begin{proof}Remark that, in particular, this implies $v_\xi\otimes \psi 1_\zeta \in (V\otimes Z)_{k}$. So we can consider a~monomial $\psi$ in negative Chevalley generators and do induction on the degree of $\psi$.

For $\psi=1$ the statement is trivial. Suppose it is true for all monomials of degree $m-1>0$ and present $\psi$ of degree $m$ as $\psi= f_{\alpha} \psi'$ for some $\alpha \in \Pi^+$. Here $\psi'\in U_q(\mathfrak{g}_-)$ has
degree $m-1$. Then $v_\xi\otimes \psi' 1_\zeta \in (V\otimes Z)_{k-1}$ by the induction assumption and the above remark. The action of $\Delta(f_\alpha)=- \gamma^{-1}(f_{\alpha} )q^{-h_\alpha}\otimes 1+q^{-h_\alpha}\otimes f_\alpha$ on the tensor $v_\xi\otimes \psi' 1_\zeta\in (V\otimes Z)_{k-1}$ yields
\begin{gather*}
 v_\xi\otimes\psi 1_\zeta= v_\xi\otimes f_\alpha \psi' 1_\zeta=\gamma^{-1}(f_{\alpha}) v_\xi\otimes \psi' 1_\zeta \qquad \mod (V\otimes Z)_{k-1}.
\end{gather*}
The rightmost term transforms to $\gamma^{-1}(\psi')\gamma^{-1}(f_{\alpha}) v_\xi\otimes 1_\zeta=\gamma^{-1}(\psi) v_\xi\otimes 1_\zeta \mod(V\otimes Z)_{k-1}$, by the induction assumption. This completes the proof.
\end{proof}

Note that this lemma is true for arbitrary highest weight modules $V$ and $Z$, not necessarily irreducible.
\begin{Corollary}\label{two-filtrations}\quad
\begin{enumerate}\itemsep=0pt
 \item[$i)$] All vectors in $V\otimes Z$ of height $\leqslant k$ belong to $(V\otimes Z)_{k}$.
\item[$ii)$] Vectors from $V^\perp_Z\otimes 1_\zeta$ of height $k$ belong to $(V\otimes Z)_{k-1}$.
 \item[$iii)$] Vectors from ${}^+\!V_Z\otimes 1_\zeta$ of height $k$ generate $(V\otimes Z)_k$ modulo $(V\otimes Z)_{k-1}$.
\item[$iv)$] For all $v\in V^+_Z$ of height $k$, $\theta_{V,Z}(v)\otimes 1_\zeta=\delta_l(v)$ modulo $(V\otimes Z)_{k-1}$.
\end{enumerate}
\end{Corollary}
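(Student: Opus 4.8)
The plan is to prove the four statements of the corollary in the order $i)$, $iv)$, then $ii)$ and $iii)$ together, since the later items depend on the earlier ones and on Lemma~\ref{diagonal}.

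For part $i)$, I would argue that $V\otimes Z$ is spanned by vectors of the form $v_\xi\otimes \psi 1_\zeta$ with $\psi\in U_q(\mathfrak{g}_-)$ homogeneous: since $Z$ is a highest weight module, every $z\in Z$ is a sum of such $\psi 1_\zeta$, and $V\otimes Z$ is spanned by the corresponding simple tensors. A vector of this shape with $\mathrm{ht}(v_\xi\otimes\psi1_\zeta)\le k$ lies in $(V\otimes Z)_k$ by the parenthetical remark at the start of the proof of Lemma~\ref{diagonal} (namely $v_\xi\otimes\psi1_\zeta\in(V\otimes Z)_k$). Hence any vector of height $\le k$ — being a linear combination of such tensors, each necessarily of height $\le k$ by weight considerations — lies in $(V\otimes Z)_k$.

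For part $iv)$: given $v\in V^+_Z$ of height $k$, write $\delta_l(v)=\sum_{\alpha\ge0}v_{\xi+\alpha}\otimes f_\alpha 1_\zeta$ as in the definition of $\theta_{V,Z}$, where $v_\xi$ is the leading coefficient and $\xi=\nu-$ (height). Apply Lemma~\ref{diagonal} termwise: each summand $v_{\xi+\alpha}\otimes f_\alpha 1_\zeta$ equals $\gamma^{-1}(f_\alpha)v_{\xi+\alpha}\otimes 1_\zeta$ modulo $(V\otimes Z)_{k-1}$, because the total weight has height $k$. Summing over $\alpha$ gives $\delta_l(v)\equiv\bigl(\sum_\alpha\gamma^{-1}(f_\alpha)v_{\xi+\alpha}\bigr)\otimes 1_\zeta$ modulo $(V\otimes Z)_{k-1}$, and by definition the element in parentheses represents $\theta_{V,Z}(v)$; its ambiguity lies in $V^\perp_Z$, so I also need part $ii)$ to know that changing the representative does not disturb the congruence — which means I should actually prove $ii)$ before $iv)$, or at least observe that $ii)$ follows independently.

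For $ii)$: a vector in $V^\perp_Z\otimes 1_\zeta$ of height $k$ is of the form $\omega(\psi_+)w\otimes 1_\zeta$ where $\psi_+\in I^+_Z$; since $\omega(\psi_+)=\gamma^{-1}\sigma(\psi_+)=\gamma^{-1}(\psi_-)$ for some $\psi_-\in I^-_Z\subset U_q(\mathfrak{g}_-)$ (up to the Borel adjustment of Corollary~\ref{inv-ideals}), Lemma~\ref{diagonal} rewrites $\gamma^{-1}(\psi_-)w\otimes 1_\zeta\equiv w\otimes\psi_- 1_\zeta$ modulo $(V\otimes Z)_{k-1}$, and $\psi_- 1_\zeta=0$ because $\psi_-\in I^-_Z$. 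Hence the vector lies in $(V\otimes Z)_{k-1}$. Then $iii)$ follows by combining $i)$ and $ii)$: vectors of height $\le k$ generate $(V\otimes Z)_k$ by $i)$, the height-$k$ ones among the generators may be taken from $V\otimes 1_\zeta$, and modulo $V^\perp_Z\otimes 1_\zeta$ — which by $ii)$ is already in $(V\otimes Z)_{k-1}$ — they are represented by vectors from the transversal ${}^+\!V_Z\otimes 1_\zeta$; vectors of height $<k$ lie in $(V\otimes Z)_{k-1}$, so ${}^+\!V_Z\otimes 1_\zeta$ in height $k$ generates the quotient.

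I expect the only real subtlety to be the bookkeeping in $ii)$: matching $\omega(I^+_Z)$ with $\tilde\omega(I^+_Z)\subset U_q(\mathfrak{g}_-)$ so that Lemma~\ref{diagonal} (which is stated for $\psi\in U_q(\mathfrak{g}_-)$) applies, which is exactly what Corollary~\ref{inv-ideals} was set up to handle, and keeping track of the weight/height of the auxiliary Cartan factor $\phi$ produced by Lemma~\ref{eq-ideals} so that it does not change the height of the vector. Everything else is a direct application of Lemma~\ref{diagonal} term by term.
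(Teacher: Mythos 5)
Your proposal is correct and follows essentially the same route as the paper: part~$i)$ from the remark in the proof of Lemma~\ref{diagonal}, part~$ii)$ by rewriting $\omega(\psi_+)w\otimes 1_\zeta$ as $w\otimes\sigma(\psi_+)1_\zeta=0$ modulo lower filtration terms, part~$iii)$ from $i)$ and $ii)$, and part~$iv)$ by applying Lemma~\ref{diagonal} termwise to $\delta_l(v)=\mathcal{F}(v\otimes 1_\zeta)$ and using $ii)$ to absorb the $V^\perp_Z$-ambiguity of the representative of $\theta_{V,Z}(v)$. The only superfluous worry is the ``Borel adjustment'' in $ii)$: since $\omega(\psi_+)=\gamma^{-1}(\sigma(\psi_+))$ with $\sigma(\psi_+)\in I^-_Z\subset U_q(\mathfrak{g}_-)$, Lemma~\ref{diagonal} applies verbatim and Corollary~\ref{inv-ideals} is not needed here.
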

\begin{proof}(i) readily follows from Lemma \ref{diagonal}. To prove (ii), observe that, for a vector $v\in V$ of height $k$ one has $\omega(I^+_Z)v\otimes 1_\zeta=v\otimes \sigma(I^+_Z) 1_\zeta=v\otimes I^-_Z 1_\zeta =0$
modulo $(V\otimes Z)_{k-1}$. Clearly (iii) follows from~(ii). To prove~(iv), pick up $v\in V^+_Z$ of height $k$ and present $\theta_{V,Z}(v)$ as $\Upsilon_Z(v) \mod V^\perp_Z$. Then modulo $(V\otimes Z)_{k-1}$ we get the required:
\begin{gather*}
\theta_{V,Z}(v)\otimes 1_\zeta=\Upsilon_Z(v) \otimes 1_\zeta = \mathcal{F}(v\otimes 1_\zeta)=\delta_l(v).
\end{gather*}
Here the left equality follows from (ii), while the middle one is a consequence of Lemma~\ref{diagonal}.
\end{proof}

\section[Complete reducibility of $V\otimes Z$]{Complete reducibility of $\boldsymbol{V\otimes Z}$}
We derive a complete reducibility criterion from the following two lemmas capturing properties of $\theta_{V,Z}$ and therefore of the canonical form on $V\otimes Z$.

\begin{Lemma}\label{ext-tw-decomp}Suppose that $\theta_{V,Z}$ is surjective. Then $V\circledast Z=V\otimes Z$.
\end{Lemma}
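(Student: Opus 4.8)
The plan is to use the height filtration $(V\otimes Z)_k$ introduced in the previous section and prove by induction on $k$ that $(V\otimes Z)_k\subseteq V\circledast Z$; since the filtration is exhaustive, this gives $V\otimes Z=V\circledast Z$. The base case $k=0$ is immediate because $(V\otimes Z)_0$ is generated by $1_\nu\otimes 1_\zeta$, which is singular and hence lies in $V\circledast Z$.

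For the inductive step, assume $(V\otimes Z)_{k-1}\subseteq V\circledast Z$. By Corollary~\ref{two-filtrations}(iii), the quotient $(V\otimes Z)_k/(V\otimes Z)_{k-1}$ is generated by the images of vectors $w\otimes 1_\zeta$ with $w\in {}^+\!V_Z$ of height $k$. So it suffices to show that every such $w\otimes 1_\zeta$ lies in $V\circledast Z$ modulo $(V\otimes Z)_{k-1}$. Here is where surjectivity of $\theta_{V,Z}$ enters: given $w\in {}^+\!V_Z$ of height $k$, choose $v\in V^+_Z$ with $\theta_{V,Z}(v)=w$ (in ${}^+\!V_Z$, i.e.\ modulo $V^\perp_Z$). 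One checks that $v$ can be taken of height $k$ as well, since $\theta_{V,Z}$ is $U_q(\mathfrak{h})$-linear and hence height-preserving, and since vectors of $V^\perp_Z$ of height $k$ already sit in $(V\otimes Z)_{k-1}$ by Corollary~\ref{two-filtrations}(ii). Then Corollary~\ref{two-filtrations}(iv) gives
\begin{gather*}
w\otimes 1_\zeta=\theta_{V,Z}(v)\otimes 1_\zeta=\delta_l(v)\qquad \mod (V\otimes Z)_{k-1}.
\end{gather*}
But $\delta_l(v)\in (V\otimes Z)^+\subseteq V\circledast Z$, and by the induction hypothesis $(V\otimes Z)_{k-1}\subseteq V\circledast Z$ as well, so $w\otimes 1_\zeta\in V\circledast Z$. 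Since these vectors generate $(V\otimes Z)_k$ modulo $(V\otimes Z)_{k-1}$ and both $V\circledast Z$ and $(V\otimes Z)_{k-1}$ are submodules, we conclude $(V\otimes Z)_k\subseteq V\circledast Z$, completing the induction.

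The main obstacle I anticipate is the bookkeeping around heights and the subspace ${}^+\!V_Z$: one must be careful that the lift $v$ of $w$ under $\theta_{V,Z}$ can genuinely be chosen homogeneous of the correct height, and that replacing $\theta_{V,Z}(v)$ (a class modulo $V^\perp_Z$) by an actual representative in ${}^+\!V_Z$ does not introduce terms outside $(V\otimes Z)_{k-1}$ — this is exactly what Corollary~\ref{two-filtrations}(ii) is for. Everything else is a clean induction assembled from the corollary's four parts; the real content is that surjectivity of $\theta_{V,Z}$ is precisely what lets us hit every generator of the associated graded by a singular vector.
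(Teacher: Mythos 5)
Your proof is correct and follows essentially the same route as the paper's: an induction on height using parts (i)--(iv) of Corollary~\ref{two-filtrations}, with surjectivity of $\theta_{V,Z}$ invoked at exactly the same point to realize the generators of $(V\otimes Z)_k/(V\otimes Z)_{k-1}$ as singular vectors $\delta_l(v)$. The only difference is presentational — you run the induction on the filtration pieces $(V\otimes Z)_k$ rather than on weight subspaces $(V\otimes Z)[\mu]$ as the paper does, which is an equivalent (and arguably slightly cleaner) bookkeeping of the same argument.
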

\begin{proof} It is sufficient to prove the equality on weight subspaces. The statement is obvious for subspace of maximal weight $\nu+\zeta\in \Lambda(V\otimes Z)$. Pick up an arbitrary $\mu$ and suppose the statement is proved for all weights of height less than $k=\mathrm{ht}(\mu)$. Corollary \ref{two-filtrations} implies modulo $(V\otimes Z)_{k-1}[\mu]$:
\begin{gather*}
(V\otimes Z)[\mu]=(V\otimes Z)_k[\mu]={}^+\!V_Z[\mu-\zeta]\otimes 1_\zeta=\theta_{V,Z}V^+_Z[\mu-\zeta]\otimes 1_\zeta\\
\hphantom{(V\otimes Z)[\mu]}{} =(V\otimes Z)^+[\mu] =(V\circledast Z)[\mu].
\end{gather*}
Here we consecutively used Lemma \ref{two-filtrations}(i),~(iii), the surjectivity assumption, Lemma~\ref{two-filtrations}(iv), and, finally, (i) along with the induction assumption. This proves the statement for all weights of height $k$ and hence for all weights, by induction.
\end{proof}

\begin{Lemma}\label{irred} Suppose that $\theta_{V,Z} $ is injective. Then all submodules of highest weight in $V\otimes Z$ are irreducible.
\end{Lemma}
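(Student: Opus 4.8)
The plan is to show that a highest weight submodule $W\subset V\otimes Z$ with singular generator $u$ of weight $\mu$ cannot contain a proper nonzero submodule. Such a proper submodule would itself contain a singular vector $u'$, necessarily of strictly greater height than $u$ (i.e.\ $\mathrm{ht}$ of its weight is $>\mathrm{ht}(\mu)$), since $u$ spans the weight space of minimal height in $W$. So it suffices to prove: if $u,u'\in(V\otimes Z)^+$ and $u'$ lies in the submodule generated by $u$, then either $u'$ is a scalar multiple of $u$ or they have incomparable weights; equivalently, if the submodule generated by $u$ is \emph{not} irreducible it must properly contain another highest weight submodule, and I must derive a contradiction with injectivity of $\theta_{V,Z}$.

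First I would translate everything through the isomorphism $\delta_l\colon V^+_Z\xrightarrow{\sim}(V\otimes Z)^+$ of Proposition~\ref{V-Z-extr}. Write $u=\delta_l(v)$, $u'=\delta_l(v')$ with $v\in V^+_Z[\xi]$, $v'\in V^+_Z[\xi']$ and $\mathrm{ht}(\xi')>\mathrm{ht}(\xi)$. The submodule generated by $u$ is the image of a Verma module homomorphism, so $u'=x\,u$ for some $x\in U_q(\mathfrak{g}_-)$ of weight $\xi'-\xi$. The key computational input is Corollary~\ref{two-filtrations}(iv): for $v$ of height $k$, $\delta_l(v)=\theta_{V,Z}(v)\otimes 1_\zeta$ modulo $(V\otimes Z)_{k-1}$. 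Applying a negative element $x$ and using Lemma~\ref{diagonal} to commute the antipode through the tensor factors, I expect to get, modulo lower height terms, that $u'=\delta_l(v')$ has leading term governed by $x$ acting on $\theta_{V,Z}(v)\otimes 1_\zeta$; but $u'$ being singular means its own leading coefficient is a scalar multiple of $1_\zeta$, forcing the $Z$-component $x\cdot 1_\zeta$ (via the relevant comultiplication/antipode bookkeeping) to be singular in $Z$, hence (irreducibility of $Z$) zero unless $x\in U_q(\mathfrak{h})$-trivial. The upshot should be that such an $x$ forces a relation showing $\theta_{V,Z}(v)$ lies in $V^\perp_Z$, i.e.\ $\theta_{V,Z}(v)=0$, contradicting injectivity, unless $W$ was irreducible to begin with.

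More precisely, the mechanism I would exploit is that a proper submodule of $W=U_q(\mathfrak{g})u$ would meet the pairing with $(V\otimes Z)^+$ trivially: by Proposition~\ref{form_features}(i) and (iii) and the fact (stated in the excerpt) that the canonical form on $V\otimes Z$ is contravariant, the radical of the form restricted to $W$ is a submodule, and $W$ is irreducible iff the form is nondegenerate on it iff its singular generator $u$ is not orthogonal to $u$. So the real claim reduces to: $\langle u,u\rangle\neq 0$ for every singular $u$, equivalently (by the Proposition identifying $\langle\theta_{V,Z}(\cdot),\cdot\rangle$ with the pulled-back canonical form on $(V\otimes Z)^+$) that $\langle\theta_{V,Z}(v),v\rangle\neq0$ for all $0\neq v\in V^+_Z$. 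Here injectivity of $\theta_{V,Z}$ gives $\theta_{V,Z}(v)\neq 0$ in ${}^+\!V_Z$, but I still need that it pairs nontrivially against $v$ itself — this is where the Shapovalov form's nondegeneracy on the irreducible $V$ enters: ${}^+\!V_Z$ is by construction the dual of $V^+_Z$ under the (nondegenerate) Shapovalov form of $V$, so a nonzero class in ${}^+\!V_Z$ automatically pairs nontrivially with $V^+_Z$; I then need the self-adjointness/symmetry of $\theta_{V,Z}$ (which follows from symmetry of the canonical form via the previous Proposition) to conclude nondegeneracy of $\langle\theta_{V,Z}(\cdot),\cdot\rangle$ on $V^+_Z$ from injectivity of $\theta_{V,Z}$ alone.

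The main obstacle I anticipate is the last point: passing from ``$\theta_{V,Z}$ is injective as a map $V^+_Z\to{}^+\!V_Z$'' to ``the bilinear form $(v,w)\mapsto\langle\theta_{V,Z}(v),w\rangle$ on $V^+_Z$ is nondegenerate.'' This is not automatic for a general linear map into a dual space, but works here because $\theta_{V,Z}$ is self-adjoint with respect to the perfect Shapovalov pairing $V^+_Z\times{}^+\!V_Z\to\mathbb{C}$ — a symmetric operator that is injective has nondegenerate associated form. I would need to verify this symmetry carefully from the definition of $\theta_{V,Z}$ via $\Upsilon_Z=\gamma^{-1}(\mathcal{F}_2)\mathcal{F}_1$ and the contravariance property $\langle xv,w\rangle=\langle v,\omega(x)w\rangle$, checking that $\omega$ applied to $\Upsilon_Z$ reproduces $\Upsilon_Z$ up to the ideals that are killed on $V^+_Z$ or annihilate it — essentially the same bookkeeping with $\omega$, $\sigma$, $\gamma^{-1}$ and Corollary~\ref{inv-ideals} already rehearsed in the paragraph preceding the Proposition on the pullback. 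Once that symmetry is in hand, the rest is the short argument above: injectivity $\Rightarrow$ nondegeneracy of the form on singular vectors $\Rightarrow$ the form is nondegenerate on each highest weight submodule $\Rightarrow$ each is irreducible by Proposition~\ref{form_features}(i).
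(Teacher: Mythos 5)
Your reduction breaks down at the very last step, and the missing piece is precisely the heart of the paper's argument. First, the point you flag as the ``main obstacle'' is actually not one: since ${}^+\!V_Z$ is by construction the Shapovalov-dual of $V^+_Z$ and the weight subspaces are finite-dimensional, an injective weight-preserving map $\theta_{V,Z}\colon V^+_Z[\mu]\to{}^+\!V_Z[\mu]$ is automatically bijective, and a linear isomorphism onto the dual space has non-degenerate associated bilinear form with no symmetry needed (symmetry is in any case free, because the form is exhibited as the pullback of the symmetric canonical form). So injectivity does give non-degeneracy of the canonical form on $(V\otimes Z)^+$. The genuine gap is your claim that $W=U_q(\mathfrak{g})u$ is irreducible iff the form is non-degenerate on $W$ iff $\langle u,u\rangle\neq 0$. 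Neither implication you need holds: (a)~non-degeneracy of the form on $(V\otimes Z)^+[\mu]$ does not force $\langle u,u\rangle\neq0$ for each individual singular $u$ (non-degenerate symmetric forms have isotropic vectors); and (b)~even granting $\langle u,u\rangle\neq0$, the restriction of the canonical form to $W$ is then only a \emph{non-zero} multiple of the Shapovalov form of $W$, which is degenerate exactly when $W$ is reducible --- so concluding irreducibility from $\langle u,u\rangle\neq0$ is circular.

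What is actually needed (and what the paper does via Proposition~\ref{form_features}(ii),(iii)) is the following contravariance computation, which you gesture at in your first paragraph but never carry out: if $W$ is reducible, a proper submodule contains a singular vector $u'=xu$ with $x\in U_q(\mathfrak{g}_-)$ of strictly negative weight, and then for \emph{every} singular $w\in(V\otimes Z)^+$ one has $\langle u',w\rangle=\langle xu,w\rangle=\langle u,\omega(x)w\rangle=0$, because $\omega(x)$ lies in the left ideal of $U_q(\mathfrak{b}_+)$ generated by the $e_\alpha$ and so kills $w$. Thus $u'$ is a non-zero vector in the radical of the form on $(V\otimes Z)^+$, contradicting the non-degeneracy you have already established. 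Note this step uses nothing about $\theta_{V,Z}$, $\delta_l$, leading coefficients, or Lemma~\ref{diagonal}; your first two paragraphs (the attempt to force $x\cdot 1_\zeta$ to be singular in $Z$, etc.) do not converge to a valid argument and can be discarded.
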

\begin{proof}Since $\theta_{V,Z} $ is injective, it is an isomorphism between $V^+_Z$ and ${}^+\!V_Z$. Then the canonical form is non-degenerate on $V\circledast Z$. If $X\subset Y$ are submodules of highest weight, then $X$ is orthogonal to all submodules in $V\circledast Z$ of the same weight, by Proposition~\ref{form_features}(iii),~(ii). Therefore~$X$ is orthogonal to all $V\circledast Z$, which is a contradiction.
\end{proof}

Now we can prove the main result of the paper.
\begin{Theorem} \label{dir-sum}The module $V\otimes Z$ is completely reducible if and only if the canonical form is non-degenerate on $(V\otimes Z)^+$.
\end{Theorem}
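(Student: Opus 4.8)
The strategy is to package the two auxiliary lemmas just proven into the two directions of the equivalence, the hard direction being ``non-degeneracy $\Rightarrow$ complete reducibility,'' which will follow once we show that non-degeneracy of the canonical form on $(V\otimes Z)^+$ forces $\theta_{V,Z}$ to be an isomorphism.

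First I would dispose of the easy direction. Suppose $V\otimes Z$ is completely reducible. Then $V\otimes Z=\bigoplus_k W_k$ with each $W_k$ irreducible of highest weight, and the canonical form, being contravariant (its kernel is a submodule by Proposition~\ref{form_features}(i)), restricts to the Shapovalov form on each $W_k$ up to scalar, hence is non-degenerate on each $W_k$; distinct $W_k$ of distinct highest weight are orthogonal by Proposition~\ref{form_features}(ii)--(iii), and one checks the form pairs the (finitely many, in each weight) highest weight summands non-degenerately, so it is non-degenerate on the span of singular vectors $(V\otimes Z)^+$. Actually the cleanest phrasing: complete reducibility means $V\otimes Z=V\circledast Z$ and the form is non-degenerate on the whole module, so \emph{a fortiori} it is non-degenerate on $(V\otimes Z)^+\subset V\circledast Z$ after intersecting with each isotypic block. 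This is the ``comes for free'' remark from the introduction.

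For the converse, assume the canonical form is non-degenerate on $(V\otimes Z)^+$. By the Proposition identifying $\langle\theta_{V,Z}(\cdot),\cdot\rangle$ with the pullback of the canonical form along $V^+_Z\simeq(V\otimes Z)^+$, non-degeneracy of that form is exactly non-degeneracy of $\langle\theta_{V,Z}(\cdot),\cdot\rangle$ as a pairing $V^+_Z\times V^+_Z\to\mathbb C$. Since $\theta_{V,Z}\colon V^+_Z\to {}^+\!V_Z$ and the Shapovalov form on $V$ pairs ${}^+\!V_Z$ with $V^+_Z$ non-degenerately (by construction ${}^+\!V_Z=V/V^\perp_Z$ is the dual of $V^+_Z$), non-degeneracy of $\langle\theta_{V,Z}(v),w\rangle$ in $w$ forces $\theta_{V,Z}(v)=0$ only if $v=0$, i.e., $\theta_{V,Z}$ is injective; and non-degeneracy in $v$, together with $\dim V^+_Z[\xi]=\dim{}^+\!V_Z[\xi]<\infty$ on each weight space, forces $\theta_{V,Z}$ surjective. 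So $\theta_{V,Z}$ is bijective. Now invoke Lemma~\ref{ext-tw-decomp}: surjectivity of $\theta_{V,Z}$ gives $V\circledast Z=V\otimes Z$, so $V\otimes Z$ is a sum of highest weight submodules. And invoke Lemma~\ref{irred}: injectivity of $\theta_{V,Z}$ gives that every highest weight submodule is irreducible. A sum of irreducible submodules is completely reducible (a standard fact: from any sum $\sum_i W_i$ of simple submodules one extracts a subset summing directly to the whole), which finishes the proof.

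The one step that needs genuine care, rather than being routine, is the weight-space finiteness used to upgrade ``injective'' to ``bijective'': I would point out that each weight space $V[\xi]$ is finite-dimensional (true for any highest weight module, since $U_q(\mathfrak g_-)$ has finite-dimensional graded pieces), hence $V^+_Z[\xi]$ and ${}^+\!V_Z[\xi]$ are finite-dimensional of equal dimension, so an injective degree-zero linear map between them is automatically surjective — this is what lets non-degeneracy on $(V\otimes Z)^+$ (equivalently injectivity of $\theta_{V,Z}$) deliver \emph{both} hypotheses of Lemmas~\ref{ext-tw-decomp} and~\ref{irred} at once. Everything else is bookkeeping with the identifications already set up in the preceding sections.
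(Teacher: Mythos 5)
Your proposal is correct and follows essentially the same route as the paper: the easy direction via orthogonality of highest weight submodules (Proposition~\ref{form_features}), and the converse by translating non-degeneracy on $(V\otimes Z)^+$ into bijectivity of $\theta_{V,Z}$ and then invoking Lemmas~\ref{ext-tw-decomp} and~\ref{irred}. Your explicit justification of the ``injective $\Rightarrow$ bijective'' step via finite-dimensional weight spaces usefully fills in what the paper states only as ``the fact that the form is non-degenerate if and only if $\theta_{V,Z}$ is an isomorphism.''
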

\begin{proof}We use the fact that the form is non-degenerate if and only if $\theta_{V,Z}$ is an isomorphism. If $V\otimes Z$ is completely reducible, then it is a sum of highest weight modules. The canonical form is non-degenerate on $V\circledast Z$ and therefore on $(V\otimes Z)^+$, by Proposition~\ref{form_features}(iii).

Conversely, if the form is non-degenerate, then all highest weight submodules are irreducible by Lemma~\ref{irred}. Since $\theta_{V,Z}$ is surjective, their sum $V\circledast Z$ exhausts all of $V\otimes Z$ by Lemma~\ref{ext-tw-decomp}.
\end{proof}

As an application of this criterion, we derive the following sufficient (and necessary) condition for complete reducibility.
\begin{Corollary}\label{dir-sum_irred} Suppose that all submodules in $V\otimes Z$ of highest weight are irreducible. Then $V\otimes Z$ is completely reducible.
\end{Corollary}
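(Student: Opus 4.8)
The plan is to deduce Corollary~\ref{dir-sum_irred} directly from Theorem~\ref{dir-sum} by showing that the hypothesis --- all highest weight submodules of $V\otimes Z$ are irreducible --- forces the canonical form to be non-degenerate on $(V\otimes Z)^+$. Since the form on $V\otimes Z$ is contravariant, its kernel is a submodule by Proposition~\ref{form_features}(i), and the restriction to $(V\otimes Z)^+$ is non-degenerate precisely when no singular vector lies in that kernel.

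First I would argue by contradiction: suppose $0\neq u\in (V\otimes Z)^+$ lies in the radical of the canonical form restricted to $(V\otimes Z)^+$. Since $u$ is singular, it generates a highest weight submodule $X\subset V\circledast Z$ with highest vector $u$. By hypothesis $X$ is irreducible, so the Shapovalov form of $X$ --- which is the restriction of the canonical form to $X$, up to scalar --- is non-degenerate on $X$; in particular $\langle u,u\rangle\neq 0$, because the canonical generator of an irreducible highest weight module is never self-orthogonal (its self-pairing controls non-degeneracy in the top weight space). But $u\in X\subset V\circledast Z$, and $V\circledast Z$ is by definition generated by $(V\otimes Z)^+$; decomposing any element of $(V\otimes Z)^+$ into a sum of singular vectors and using Proposition~\ref{form_features}(ii) to restrict to the weight space of $u$, one sees that $u$ being orthogonal to all of $(V\otimes Z)^+$ of its own weight already gives $\langle u,u\rangle=0$, a contradiction.

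The main obstacle is the bookkeeping in the step ``$\langle u,u\rangle\neq 0$ because $X$ is irreducible'': one must be careful that the canonical form restricted to the submodule $X$ really is (a nonzero multiple of) the Shapovalov form of $X$, and that non-degeneracy of that form implies the highest vector $u$ pairs nontrivially with some vector of weight $\mathrm{wt}(u)$ inside $X$ --- but by contravariance and Proposition~\ref{form_features}(ii) the only vectors of that weight pairing with $u$ lie in the top weight space of $X$, which is one-dimensional and spanned by $u$ itself. This is exactly the standard fact that an irreducible highest weight module has non-zero contravariant form on its top graded piece, so I would simply cite the uniqueness/non-degeneracy statement already recalled in Section~4 (``the Shapovalov form is non-degenerate if and only if the module is irreducible''). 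Everything else is immediate from results in the excerpt.

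Alternatively, and perhaps more cleanly, I would phrase it via $\theta_{V,Z}$: the hypothesis that all highest weight submodules are irreducible in particular makes each of them have non-degenerate Shapovalov form, and one checks that this is equivalent to injectivity of $\theta_{V,Z}$ (the radical of the pulled-back form $\langle\theta_{V,Z}(\cdot),\cdot\rangle$ on $V^+_Z$ corresponds to singular vectors generating a submodule on which the canonical form degenerates, i.e.\ a reducible highest weight submodule). Then Lemma~\ref{ext-tw-decomp} combined with Lemma~\ref{irred} --- or directly the ``conversely'' direction of Theorem~\ref{dir-sum} --- yields complete reducibility. I expect the contradiction-based argument to be the shortest to write, so that is the route I would take.
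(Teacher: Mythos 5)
There is a genuine gap at the pivotal step of your argument: the claim that irreducibility of the highest weight submodule $X=U_q(\mathfrak{g})u$ forces $\langle u,u\rangle\neq 0$. The restriction of the canonical form to $X$ is a contravariant form on $X$, hence by the uniqueness recalled in Section~4 it equals $\langle u,u\rangle$ times the Shapovalov form of $X$ normalized at its generator; that scalar may perfectly well be zero, in which case the restriction is identically zero even though the intrinsic Shapovalov form of the irreducible module $X$ is non-degenerate. So ``$X$ irreducible implies $\langle u,u\rangle\neq0$'' is essentially the statement you are trying to prove, not a fact you may quote, and the same circularity infects your alternative phrasing (``a submodule on which the canonical form degenerates, i.e.\ a reducible highest weight submodule'' --- that ``i.e.''\ is false in the direction you need). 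A concrete counterexample: for $\mathfrak{g}=\mathfrak{sl}(2)$ take $V=\mathbb{C}^2$ with weight basis $v_+$, $v_-=fv_+$, and $Z=M_{-1}$ the (irreducible) Verma module of highest weight $-1$. The vector $u=f(v_+\otimes 1_{-1})$ is singular of weight $-2$ and generates a copy of the irreducible Verma module $M_{-2}$, yet $\langle u,u\rangle$ is proportional to $\langle v_+\otimes 1_{-1},\,ef(v_+\otimes 1_{-1})\rangle=\langle v_+\otimes 1_{-1},\,h(v_+\otimes 1_{-1})\rangle=0$, since $v_+\otimes 1_{-1}$ has weight zero. The canonical form vanishes identically on this irreducible highest weight submodule. (The hypothesis of the Corollary fails here, because $v_+\otimes 1_{-1}$ generates a reducible copy of $M_0$, so there is no conflict with the statement --- but the example refutes your key implication.)

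The missing ingredient is exactly what the paper's proof supplies: an induction on the height filtration $(V\otimes Z)_k$. Assuming $(V\otimes Z)_{k-1}$ is already a direct sum of irreducible highest weight modules whose canonical generators have height $<k$, a vector $v\in V^+_Z$ of height $k$ killed by $\theta_{V,Z}$ would, by Corollary~\ref{two-filtrations}(iv), produce a singular vector $\delta_l(v)$ of height $k$ lying in $(V\otimes Z)_{k-1}$; but a singular vector in a direct sum of irreducibles is a combination of their canonical generators, all of height $<k$ --- a contradiction. This gives injectivity, hence bijectivity, of $\theta_{V,Z}$ height by height, and Corollary~\ref{two-filtrations}(iii),~(iv) then propagate the direct sum decomposition to $(V\otimes Z)_k$. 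Your attempt to obtain non-degeneracy on $(V\otimes Z)^+$ ``locally'', one submodule at a time, cannot succeed: orthogonality of $u$ to all of $(V\otimes Z)^+$ only makes $X$ orthogonal to $V\circledast Z$, and without already knowing $V\circledast Z=V\otimes Z$ (Lemma~\ref{ext-tw-decomp}, which itself requires surjectivity of $\theta_{V,Z}$) this contradicts nothing.
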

\begin{proof} It is sufficient to prove that $\theta_{V,Z}$ is injective and therefore surjective. That is obvious for its restriction to $\mathbb{C} 1_\nu\subset V^+_Z$, and further we proceed by induction on height.
Suppose that $v\in V^+_Z$ is a non-zero vector of height $k$ killed by $\theta_{V,Z}$. Then by Corollary \ref{two-filtrations}(iv), $\delta_l(v)\in (V\otimes Z)_{k-1}$ along with the submodule it generates. But that is impossible since, by induction assumption, $(V\otimes Z)_{k-1}$ is a direct sum of irreducible modules whose canonical generators have heights $< k$. Therefore $\theta_{V,Z}$ is an isomorphism on $ V^+_Z$ of height $k$, and $(V\otimes Z)_{k}$ is a sum of highest weight submodules, by Corollary~\ref{two-filtrations}(iii) and~(iv). This facilitates the induction transition and proves the assertion.
\end{proof}

\section{Special case of finite-dimensional modules}
Since finite-dimensional modules are completely reducible, so is $V\otimes Z$ once $\dim V<\infty$ and $\dim Z<\infty$. Although this case is not particularly interesting for this exposition, we include it for completeness and give an independent proof of complete reducibility via a sort of ``unitarian trick''.

Recall that a complex conjugation on a complex vector space $V$ is an $\mathbb{R}$-linear involution $V\to V$, $v\mapsto \bar v$ satisfying $\overline{c v}=\bar c \bar v$ for all $c\in \mathbb{C}$ and $v\in V$. It defines a~real structure on $V$, i.e., an $\mathbb{R}$-subspace $V_\mathbb{R}$ of stable vectors, such that $V=V_\mathbb{R}\otimes _\mathbb{R} \mathbb{C}$.

A bilinear form is called real if $\overline{\langle v,w \rangle }=\langle \bar v,\bar w \rangle $ for all $v,w\in V$. A real structure on $V$ establishes a bijection between bilinear and sesquilinear forms. For any bilinear form $\langle \cdot,\cdot\rangle$ the form $v\otimes w\mapsto \langle \bar v, w\rangle= (v, w)$ is anti-linear in the first argument and linear in the second. For a real symmetric form $\langle \cdot,\cdot\rangle$ the form $(\cdot,\cdot)$ is Hermitian. Clearly they both are degenerate and non-degenerate simultaneously.

Suppose that $q\in \mathbb{R}$ and define a complex conjugation on $U_q(\mathfrak{g})$ as an involutive antilinear Hopf algebra automorphism acting on the generators by
\begin{gather*}
\overline{q^{\pm h_\alpha}}=q^{\pm h_\alpha}, \qquad \bar f_\alpha =- f_\alpha, \qquad \bar e_\alpha= -e_\alpha.
\end{gather*}
We call a representation on a vector space $V$ with a real structure real if $\overline{x v}=\bar x \bar v$, for all $v\in V$ and all $x\in U_q(\mathfrak{g})$. Since $U_q(\mathfrak{b}_+)$ is invariant under the complex conjugation, it readily follows that a Verma module~$M_\lambda$ is real if and only if the highest weight $\lambda$ is in the real span of~$\Pi^+$. Then for integral dominant $\lambda$ all the submodules in $M_\lambda$ are real, as well as the irreducible finite-dimensional quotient of $M_\lambda$.

The compact real form on $U_q(\mathfrak{g})$ is an antilinear involutive algebra anti-automorphism and coalgebra automorphism~$*$. It is defined on the generators by
\begin{gather*}
(q^{\pm h_\alpha})^*=q^{\pm h_\alpha}, \qquad f_\alpha^*=q^{-h_\alpha} e_\alpha, \qquad e_\alpha^*= f_\alpha q^{h_\alpha}.
\end{gather*}
One can check that $\omega \circ *$ is the complex conjugation introduced above.

A representation on a module $V$ with a Hermitian form $( \cdot,\cdot )$ is called unitary if $(x v , w)=(v , x^* w )$ for all $v,w\in V$ and $x\in U_q(\mathfrak{g})$. In what follows, the Hermitian form $(\cdot ,\cdot )$ on a module with real structure is obtained from a $\omega$-contravariant form by the complex conjugation of the first argument. Then the representation automatically becomes unitary.

Consider an example of $(n+1)$-dimensional irreducible representation of $U_q (\mathfrak{sl}(2) )$ with the highest vector $1_n$. Then the Hermitian form is positive definite. Indeed,
\begin{gather*}
\big(f^m 1_n, f^m 1_n\big)=\bigl(1_n, \big(q^{-h}e\big)^m f^m1_n\bigr)\\
\hphantom{\big(f^m 1_n, f^m 1_n\big)}{} =\bigl(1_n, [m]_q! [h]_q[h-1]_q\cdots [h-m+1]_qq^{-mh+m(m+1)}1_n\bigr).
\end{gather*}
It is equal to $[n]_q[m-1]_q\cdots [n-m+1]_qq^{-mn+m(m+1)}[m]_q! (1_n,1_n)>0$ since $m\leqslant n$. In particular, the Hermitian operator $q^{-h}e f$ is non-negative and turns zero only on the lowest vector, that is, on $\ker f$.
\begin{Lemma} For any finite-dimensional $U_q(\mathfrak{g})$-module $V$, the Hermitian contravariant form on~$V$ relative to the compact real form on $U_q(\mathfrak{g})$ is positive definite.
\end{Lemma}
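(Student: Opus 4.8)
The plan is to reduce the general case to the rank-one computation already carried out for $U_q(\mathfrak{sl}(2))$. The key structural fact is that a finite-dimensional $U_q(\mathfrak{g})$-module $V$ is completely reducible, so it suffices to treat $V$ irreducible with highest weight $\lambda$ integral dominant and highest vector $1_\lambda$; by the discussion above $V$ carries a real structure and the Hermitian form $(\cdot,\cdot)$ obtained from the $\omega$-contravariant form makes $V$ a unitary module over $U_q(\mathfrak{g})$ with respect to the compact real form $*$. Normalize so that $(1_\lambda,1_\lambda)>0$. What I want to show is that $(\cdot,\cdot)$ is positive definite on every weight space $V[\mu]$.

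**Main steps.** First I would reduce from a weight vector to a line: since distinct weight spaces are orthogonal (Proposition~\ref{form_features}(ii) transported to the Hermitian form), it is enough to show $(v,v)>0$ for every nonzero weight vector $v$. Second, I would use the cyclicity of $V$ over $U_q(\mathfrak{g}_-)$: any weight vector of weight $\mu\neq\lambda$ can be written as $v=f_\alpha w$ for suitable $\alpha\in\Pi^+$ and $w\in V$ of strictly higher weight, or more robustly, run a downward induction on the height $\mathrm{ht}(\mu)$, the base case $\mu=\lambda$ being $(1_\lambda,1_\lambda)>0$. Third — the heart of the argument — for a weight vector $v$ of height $k$ I would invoke the $U_q(\mathfrak{sl}_\alpha(2))$-subalgebra generated by $e_\alpha,f_\alpha,q^{\pm h_\alpha}$ for some simple $\alpha$ with $(f_\alpha)^* = q^{-h_\alpha}e_\alpha$: decompose $V$ restricted to this $\mathfrak{sl}(2)$-copy into irreducibles, note that the restriction of $(\cdot,\cdot)$ stays $*$-contravariant for the subalgebra, hence by the rank-one computation the operator $q^{-h_\alpha}e_\alpha f_\alpha$ is non-negative and vanishes only on $\ker f_\alpha$. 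Writing $v = f_\alpha w + v'$ with $v'\in\ker f_\alpha$ inside the appropriate weight space and $w$ of height $k-1$, one gets
\begin{gather*}
(v,v) = (f_\alpha w, f_\alpha w) + (v',v') = \big(w, q^{-h_\alpha}e_\alpha f_\alpha\, w\big) + (v',v'),
\end{gather*}
which is a sum of non-negative terms; positivity then follows from the induction hypothesis applied to $w$ together with the observation that a nonzero $v'\in\ker f_\alpha$ spanning part of a lowest-weight line of an $\mathfrak{sl}(2)$-string is again handled by restricting to a different simple root, or ultimately lands in $\ker f_\beta$ for all $\beta$, i.e.\ is a lowest weight vector, whose norm is pinned to $(1_\lambda,1_\lambda)$ through a chain of $\mathfrak{sl}(2)$-strings.

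**Main obstacle.** The delicate point is the bookkeeping in the inductive step: a single weight space need not be generated by $f_\alpha$ acting on \emph{one} higher weight space, so one must either diagonalize the family of commuting non-negative operators $q^{-h_\alpha}e_\alpha f_\alpha$ simultaneously or, more cleanly, argue that the kernel of the form (which is a submodule by Proposition~\ref{form_features}(i)) is zero because $V$ is irreducible, reducing positive-\emph{semi}definiteness to positive definiteness for free — so the real work is only to prove $(v,v)\geq 0$ for all $v$. For semidefiniteness the $\mathfrak{sl}(2)$-string argument above suffices once one knows $(v',v')\geq0$ for $v'\in\ker f_\alpha$, and that follows by a symmetric induction using the \emph{raising} operators $e_\alpha^* e_\alpha = f_\alpha q^{h_\alpha}e_\alpha$ and the fact that the top of every string meets $\mathbb{C}1_\lambda$ after finitely many steps. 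I expect the cleanest write-up to decouple the two halves: (i) the kernel is a proper submodule hence trivial, so the form is either definite or indefinite; (ii) a continuity/specialization argument in $q$ — at $q=1$ the form is the classical Shapovalov form of a compact real form, manifestly positive definite, and positivity is an open condition preserved as long as the form stays non-degenerate, which it does for all $q>0$ not a root of unity — rules out the indefinite case. That deformation argument, together with the explicit $\mathfrak{sl}(2)$ base case already in the text, is likely the shortest route and sidesteps the simultaneous-diagonalization bookkeeping entirely.
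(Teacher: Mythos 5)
Your primary route is the paper's own: downward induction on the height of a weight vector, with the rank\-/one computation (showing that $q^{-h_\alpha}e_\alpha f_\alpha$ is non\-/negative Hermitian with kernel $\ker f_\alpha$) doing the work at each step. The paper writes a nonzero vector of height $k+1$ as $v=f_\alpha w$ with $w$ chosen orthogonal to $\ker f_\alpha\cap V_k$ and concludes $(v,v)=\bigl(w,q^{-h_\alpha}e_\alpha f_\alpha w\bigr)>0$. However, your displayed identity $(v,v)=(f_\alpha w,f_\alpha w)+(v',v')$ is not justified as stated: for $v'\in\ker f_\alpha$ the cross term is $(f_\alpha w,v')=\bigl(w,q^{-h_\alpha}e_\alpha v'\bigr)$, and $e_\alpha$ does not kill $\ker f_\alpha$ (the bottom of an $\mathfrak{sl}(2)_\alpha$-string of length $\geqslant 2$ is in $\ker f_\alpha$ but not in $\ker e_\alpha$), so there is no reason for this term to vanish. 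To get an orthogonal splitting you must decompose $V[\mu]$ along the eigenspaces of a \emph{self-adjoint} operator such as $f_\alpha q^{-h_\alpha}e_\alpha$, whose image lies in $\operatorname{im}f_\alpha$ and whose kernel is $\ker e_\alpha$; orthogonality then comes from $(Au,v')=(u,Av')=0$. The second, more serious gap is the disposal of the remainder $v'$: passing to another simple root $\beta$ produces a new remainder that need not stay in $\ker f_\alpha$ (or $\ker e_\alpha$), so the recursion does not terminate at a vector killed by all $f_\beta$ as you claim; the "ultimately lands in $\ker f_\beta$ for all $\beta$" step is exactly the unproved bookkeeping you flagged. (To be fair, the paper's own proof is terse at the very same point: not every vector of height $k+1$ is of the form $f_\alpha w$ for a \emph{single} simple $\alpha$ --- the two-dimensional zero weight space of the adjoint representation of $\mathfrak{sl}(3)$ already shows this --- so some version of this decomposition argument is needed there too.)

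Two of your side remarks are worth keeping, and one should be treated with caution. The observation that the kernel of the form is a submodule, hence zero for irreducible $V$, so that only positive \emph{semi}-definiteness needs to be proved, is a genuine and useful simplification not used in the paper (non-degenerate plus semi-definite gives definite via Cauchy--Schwarz). By contrast, the $q\to1$ deformation argument, while a legitimately different route, imports input that is neither proved nor cited here: a trivialization of the family of modules and contravariant forms over $q\in(0,\infty)$ with constant weight multiplicities, non-degeneracy of the form for all such $q$, and the classical $q=1$ case itself (whose standard proof is either the compact-group unitarian trick or precisely the $\mathfrak{sl}(2)$-string induction you are trying to avoid). As written it trades one gap for several. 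I would recommend writing up the height induction with the eigenspace decomposition of the self-adjoint rank-one operators made explicit, combined with your reduction to semi-definiteness.
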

\begin{proof}Assume for simplicity that $V$ is irreducible. Let $V_k\subset V$ denote the vector subspace spanned by vectors of height $\leqslant k$. We need to prove that the form is positive definite on all~$V_k$. It is so on $V_0$ by construction as it is normalized to $(1_\nu,1_\nu)=1$ on the highest vector. Assume the lemma proved for $V_k$ with $k\geqslant 0$ and pick up some non-zero $v$ of height $k+1$. Then $v=f_\alpha w\not =0$ for some $\alpha \in \Pi^+$ and $w\in V_k$. By induction assumption, the form is positive definite on $V_k$, so we can assume that $w$ is orthogonal to $\ker f_\alpha\cap V_{k}$. As $q^{-h_\alpha}e_\alpha f_\alpha$ is positive on the span of such vectors, $(f_\alpha w,f_\alpha w)=(w, q^{-h_\alpha}e_\alpha f_\alpha w)>0$. Therefore the form is positive definite on $V_{k+1}$ and on all $V$, by the induction argument.
\end{proof}

If the modules $V$ and $Z$ are equipped with a real structure, then so is $V\otimes Z$, and $\overline{(V\otimes Z)^+}=(V\otimes Z)^+$. This implies that the restriction of the Hermitian form to $(V\otimes Z)^+$ is obtained from the restriction of the canonical form by the conjugation of the first argument.
\begin{Proposition}Suppose that highest weight modules $V$ and $Z$ are finite-dimensional. Then the canonical form on $V\otimes Z$ is non-degenerate on $(V\otimes Z)^+$.
\end{Proposition}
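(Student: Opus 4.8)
The plan is to reduce non-degeneracy of the canonical (bilinear) form on $(V\otimes Z)^+$ to positive-definiteness of an associated Hermitian form, and then quote the previous Lemma. Under the running assumption $q\in\mathbb{R}$, taken positive so that $[n]_q>0$ for all integers $n>0$ (the case $q\to 1$ being identical), I would begin by recording that a finite-dimensional irreducible module has integral dominant highest weight, hence a highest weight in the real span of $\Pi^+$; thus $V$ and $Z$ are real $U_q(\mathfrak{g})$-modules. Then $V\otimes Z$ inherits a real structure, and since the equations $e_\alpha u=0$ that carve out $(V\otimes Z)^+$ have real coefficients, that subspace is conjugation-stable.

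The second step moves to the Hermitian picture. By the discussion preceding the statement, conjugating the first argument turns the canonical form $\langle\cdot,\cdot\rangle$ on $V\otimes Z$ into a Hermitian, $*$-contravariant (hence unitary) form $(\cdot,\cdot)$; being the conjugate-first-argument companion of a product of forms, it equals the product of the Hermitian $*$-contravariant forms on $V$ and on $Z$. Each factor is positive definite by the preceding Lemma, and a tensor product of positive-definite Hermitian forms is again positive definite — orthonormal bases of the factors tensor to an orthonormal basis. Hence $(\cdot,\cdot)$ is positive definite on $V\otimes Z$, \emph{a fortiori} on the conjugation-stable $(V\otimes Z)^+$, where it is non-degenerate. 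Since a real symmetric bilinear form and the Hermitian form obtained from it by conjugating the first argument are degenerate simultaneously, and since the restriction of $(\cdot,\cdot)$ to $(V\otimes Z)^+$ is precisely that companion of the restricted canonical form, the canonical form is non-degenerate on $(V\otimes Z)^+$.

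So in the real case this is essentially bookkeeping on top of the previous Lemma, and I expect no serious obstacle there: the work is only in aligning the identifications (real structure on a tensor product, unitarity of the product form, simultaneity of degeneracy of a bilinear form and its sesquilinear companion), all of which has been set up in the preliminaries. The one genuinely delicate point — which I regard as the main obstacle — is that the unitarian trick only covers $q\in\mathbb{R}_{>0}$ and the classical limit; for other non-root-of-unity $q$ I would argue by specialization, observing that on each weight space $(V\otimes Z)^+[\mu]$ the Gram determinant of the canonical form is a Laurent polynomial in a fixed fractional power of $q$ which, by the computation above, does not vanish on $(0,1)\cup(1,\infty)$ and therefore has only finitely many zeros in $\mathbb{C}^*$, while closing the gap at the remaining exceptional values ultimately rests on semisimplicity of the non-root-of-unity category (the Weyl theorem). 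For the purposes of this section, the first two paragraphs suffice.
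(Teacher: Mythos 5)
Your proof is correct and follows essentially the same route as the paper: pass to the Hermitian companion of the canonical form via the real structure, invoke the preceding Lemma for positive definiteness of each factor (hence of the product form on $V\otimes Z$), restrict to the conjugation-stable subspace $(V\otimes Z)^+$, and transfer non-degeneracy back to the bilinear form. The paper's own proof is just this in two sentences, working under the section's standing assumption $q\in\mathbb{R}$ and not addressing the extension to general non-root-of-unity $q$ that you raise in your final paragraph.
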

\begin{proof} The corresponding Hermitian form is positive definite on $V\otimes Z$ and therefore non-degenerate on $(V\otimes Z)^+$. Therefore the canonical bilinear form is non-degenerate on $(V\otimes Z)^+$ as well.
\end{proof}

\section{On computing extremal twist}
Observe that the modules $V$ and $Z$ enter the tensor product in a symmetric way. Therefore one can consider the operator $\theta_{Z,V}\colon \to {}^+\!Z_V$, which comes from the alternative parametrization of singular vectors with elements of $ Z^+_V$. The operators $\theta_{V,Z}$ and $\theta_{Z,V}$ are equivalent in the sense that they are simultaneously invertible. However, from the technical point of view, one of them may be easier to work with than the other. Such an example can be found in~\cite{M1}, where one of the spaces $ V^+_Z$, $ Z^+_V$ is easy to identify while the other is unknown.

The canonical form on $V\otimes Z$ translates the question of complete reducibility to non-dege\-ne\-ra\-cy of a matrix, which is finite-dimensional when restricted to each weight subspace. For computational purposes, one can work either with $\theta_{V,Z}$ or with $\theta_{Z,V}$, depending on a particular situation. For a special case when $Z$ is a Verma module, $\theta^{-1}_{V,Z}$ is expressed in~\cite{EV} through the Zhelobenko cocycle~\cite{Zh} via dynamical twist, which is closely related to $\mathcal{F}$. In this section we will show how the result of \cite{EV} can be used in a more general situation.

 Observe that calculation of $\theta_{V,Z}$ through $\Upsilon_Z$ involves more information than required for a particular $V$. In fact, given a singular vector $u=\sum_i v_i\otimes z_i$, we need to lift only its $Z$-coefficients: $z_i\mapsto f_i\in U_q(\mathfrak{g}_-)$. Then $\theta_{V,Z}(v)= \sum_{i}\gamma^{-1}(f_i)v_i$, where $v\in V$ is the leading coefficient. This observation leads to the following method of finding $\theta_{V,Z}$.

\looseness=-1 For a Verma module $\hat M_\zeta$, the operator $\theta_{V,\hat M_\zeta}$ is a rational trigonometric function of $\zeta$ and may have poles when $\hat M_\zeta$ becomes reducible. Still we can consider it on a subspace in~$V$, where it is regular. Then we can use $\theta_{V,\hat M_\zeta}$ to calculate $\theta_{V,Z}$ through a limit procedure, as explained below.

 Let $V^+_{\zeta}$ denote the vector space spanned by leading $V$-coefficients of singular vectors in $\big(V\otimes \hat M_\zeta\big)^+$. Chose a section $\delta'_l$ of the map $\big(V\otimes \hat M_\zeta\big)^+\to V^+_\zeta$ assigning the leading coefficient to a singular vector. For $v\in V^+_{\zeta}$ pick up a presentation $u=\sum_i v_i\otimes f_i1_\zeta =\delta'_l(v)\in V\otimes \hat M_\zeta$. We define $\theta_{V,\hat M_\zeta}\colon V^+_\zeta\to {}^+\!V_Z$ by projecting $\sum_{i}\gamma^{-1}(f_i)v_i$ to ${}^+\!V_Z$ along $V^\perp_Z$ and assigning the result to~$v$. Note that it is independent of the choice of $\delta'_l$. Indeed, $\delta'_l$ delivers singular vectors in $V\otimes \hat M_\zeta$ that survive in $V\otimes Z$. If a singular vector $u=\sum_i v_i\otimes f_i1_\zeta$ vanishes in $V\otimes Z$, then all $f_i1_\zeta$ belong to $I^-_Z1_\zeta$. But then $\sum_{i}\gamma^{-1}(f_i)v$ is killed in projection to ${}^+\!V_Z$.

\begin{Proposition}\label{ex_twist_lift}Suppose that all singular vectors in $V\otimes Z$ are images of singular vectors in $V\otimes \hat M_\zeta$. Then $V^+_Z= V^+_{\zeta}$ and $\theta_{V,Z}=\theta_{V,\hat M_\zeta}$.
 \end{Proposition}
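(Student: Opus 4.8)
The plan is to realize $Z$ as the highest weight quotient of $\hat M_\zeta$, with the projection $\pi\colon\hat M_\zeta\to Z$ fixing the highest vector $1_\zeta$, and to analyse the induced epimorphism $\mathrm{id}\otimes\pi\colon V\otimes\hat M_\zeta\to V\otimes Z$. The first thing I would establish is the one elementary fact on which everything rests: $\mathrm{id}\otimes\pi$ preserves leading $V$-coefficients. Since $\pi$ is a surjection of highest weight modules, its transpose $\pi^*\colon{}^*\!Z\to{}^*\!\hat M_\zeta$ between the restricted duals is injective and carries the weight $-\zeta$ generator ${}^*\!1_\zeta$ of ${}^*\!Z$ onto the weight $-\zeta$ generator of ${}^*\!\hat M_\zeta$; and extracting the leading $V$-coefficient of an element of a tensor product with $\hat M_\zeta$ or $Z$ amounts to pairing its second tensor leg with ${}^*\!1_\zeta$, i.e., reading off the coefficient of $1_\zeta$ in the component of weight $\zeta$. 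Hence for every $\tilde u\in V\otimes\hat M_\zeta$ the leading $V$-coefficient of $(\mathrm{id}\otimes\pi)(\tilde u)$ equals that of $\tilde u$.

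With this in hand I would prove $V^+_Z=V^+_\zeta$. One inclusion requires no hypothesis: a $U_q(\mathfrak g)$-module map sends a singular vector to a singular vector or to $0$, and, by the first step, $\mathrm{id}\otimes\pi$ does not kill a vector with nonzero leading coefficient; so if $v\ne 0$ is the leading $V$-coefficient of a singular $\tilde u\in V\otimes\hat M_\zeta$, it is the leading $V$-coefficient of the nonzero singular vector $(\mathrm{id}\otimes\pi)(\tilde u)\in V\otimes Z$, whence $v\in V^+_Z$ by Proposition~\ref{V-Z-extr}. This yields $V^+_\zeta\subseteq V^+_Z$. For the reverse inclusion the hypothesis enters: any $v\in V^+_Z$ is the leading $V$-coefficient of $\delta_l(v)\in(V\otimes Z)^+$, which by assumption equals $(\mathrm{id}\otimes\pi)(\tilde u)$ for some singular $\tilde u\in V\otimes\hat M_\zeta$; by the previous paragraph $v$ is then also the leading $V$-coefficient of $\tilde u$, so $v\in V^+_\zeta$.

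Finally I would identify the two extremal twists on the common domain $V^+_Z=V^+_\zeta$. Fix $0\ne v\in V^+_Z[\xi]$ and, using the hypothesis once more, write $\delta_l(v)=(\mathrm{id}\otimes\pi)(\tilde u)$ with a singular $\tilde u\in(V\otimes\hat M_\zeta)^+$; by the first step $\tilde u$ has leading $V$-coefficient $v$, so it is an admissible value $\delta'_l(v)$ of a section, and since $\theta_{V,\hat M_\zeta}$ does not depend on the chosen section I may compute it from $\tilde u$. As $\hat M_\zeta$ is free over $U_q(\mathfrak g_-)$, a routine regrouping by weights writes $\tilde u=\sum_i v_i\otimes f_i1_\zeta$ with $f_i\in U_q(\mathfrak g_-)$ of weight $-\alpha_i$ and $v_i\in V$ of weight $\xi+\alpha_i$, giving $\theta_{V,\hat M_\zeta}(v)=\sum_i\gamma^{-1}(f_i)v_i$ modulo $V^\perp_Z$. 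Applying $\mathrm{id}\otimes\pi$ gives $\delta_l(v)=\sum_i v_i\otimes f_i1_\zeta$ in $V\otimes Z$, which is precisely a presentation of $\delta_l(v)$ of the shape used in the definition of $\theta_{V,Z}$, the $f_i$ being lifts of the $Z$-coefficients $f_i1_\zeta$; hence $\theta_{V,Z}(v)=\sum_i\gamma^{-1}(f_i)v_i$ modulo $V^\perp_Z$ as well, by the independence of $\theta_{V,Z}$ of the choice of lifts. Therefore $\theta_{V,Z}(v)=\theta_{V,\hat M_\zeta}(v)$; the case $v=0$ is trivial, and we conclude $\theta_{V,Z}=\theta_{V,\hat M_\zeta}$.

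The routine ingredients — existence of $\pi$, exactness of restricted duality on the finite-dimensional weight pieces, the weight regrouping, and the two independence-of-choice statements already recorded in the text — pose no difficulty. The point that demands care, and which I regard as the crux, is the bookkeeping in the last step: that a single presentation $\tilde u=\sum_i v_i\otimes f_i1_\zeta$ upstairs, pushed down by $\mathrm{id}\otimes\pi$, produces a presentation of $\delta_l(v)$ that literally feeds the definition of $\theta_{V,Z}$, with the $f_i$ admissible as lifts of the $Z$-coefficients $f_i1_\zeta$ even though the $f_i1_\zeta$ need not be linearly independent in $Z$ and such lifts are only well defined modulo $I^-_Z$. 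Once one verifies that both twists are evaluated by the identical formula $\sum_i\gamma^{-1}(f_i)v_i\bmod V^\perp_Z$ on the identical data, the proposition is immediate, and the independence assertions proved just before it are exactly what legitimize this.
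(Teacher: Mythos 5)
Your proof is correct and follows essentially the same route as the paper's: you show the projection $\mathrm{id}\otimes\pi$ preserves leading $V$-coefficients to get $V^+_Z=V^+_\zeta$, and then observe that a presentation $\tilde u=\sum_i v_i\otimes f_i1_\zeta$ of a singular vector upstairs pushes down to a presentation of $\delta_l(v)$ with the same lifts $f_i$, which is precisely the paper's remark that ``a lift of $\hat u$ is a lift of $u$.'' You merely spell out the details (the dual-map argument for preservation of leading coefficients, both inclusions, and the independence-of-choice bookkeeping) that the paper leaves implicit.
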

\begin{proof} The image of $\delta_l'$ consists of singular vectors that survive in projection $V\otimes \hat M_\zeta\to V\otimes Z$, hence the equality $V^+_Z= V^+_{\zeta}$.

Let $u\in V\otimes Z$ be a singular vector with leading coefficient $v\in V^+_Z$. By the hypothesis, it is the image of a singular vector $\hat u\in V\otimes \hat M_\zeta$ with the same leading coefficient. But then $\theta_{V,\hat M_\zeta}(v)=\theta_{V,Z}(v)$, because a lift of $\hat u$ is a lift of $u$.
\end{proof}

\subsection*{Acknowledgements}

We are grateful to Joseph Bernstein for useful discussions. We are also indebted to the anonymous referees for careful reading of the text and valuable remarks. This study was supported by the RFBR grant 15-01-03148.

\pdfbookmark[1]{References}{ref}
\LastPageEnding

\end{document}